\documentclass[12pt]{article}
\usepackage[numbers,sort&compress]{natbib}
\pagestyle{myheadings} \textwidth=16truecm \textheight=21truecm
\oddsidemargin=0mm \evensidemargin=0mm
\usepackage{amssymb,amsmath,mathrsfs,amsfonts,amsthm}
\headheight=10mm \headsep=3mm \topmargin=0mm
\usepackage{enumerate}
\usepackage{paralist}
\usepackage{color}
\usepackage{epsfig}
\usepackage{graphicx}
\usepackage{setspace}
\usepackage{appendix}
\bibliographystyle{abbrvnat}

\newtheorem{thm}{Theorem}[section]
\newtheorem{cor}[thm]{Corollary}
\newtheorem{lem}[thm]{Lemma}
\newtheorem{prop}[thm]{Proposition}
\newtheorem*{con}{Conjucture}

\begin{document}
%\doublespacing
\def\Xint#1{\mathchoice
  {\XXint\displaystyle\textstyle{#1}}%
  {\XXint\textstyle\scriptstyle{#1}}%
  {\XXint\scriptstyle\scriptscriptstyle{#1}}%
  {\XXint\scriptscriptstyle\scriptscriptstyle{#1}}%
  \!\int}
\def\XXint#1#2#3{{\setbox0=\hbox{$#1{#2#3}{\int}$}
  \vcenter{\hbox{$#2#3$}}\kern-.5\wd0}}
\def\ddashint{\Xint=}
\def\dashint{\Xint-}

\newcommand{\R}{\mathbb{R}}

\def\pd#1#2{\frac{\partial#1}{\partial#2}}
\def\dfrac{\displaystyle\frac}
\let\oldsection\section
\renewcommand\section{\setcounter{equation}{0}\oldsection}
\renewcommand\thesection{\arabic{section}}
\renewcommand\theequation{\thesection.\arabic{equation}}
\newtheorem{conj}{Conjection}
\newtheorem{remark}{Remark}[section]
\allowdisplaybreaks

\title{Global dynamics of competition models with  nonlocal dispersals I: Symmetric kernels
\thanks{The first author is supported by Shanghai Postdoctoral Science Foundation (No. 13R21412600) and Postdoctoral Science Foundation of China (No. 2014M551359).
The second author is  supported by Chinese NSF (No. 11201148), Shanghai Pujiang Program (No. 13PJ1402400).} }

\author{Xueli Bai{\thanks{
E-mail: mybxl110@163.com}}, Fang Li{\thanks{Corresponding author.
E-mail: fli@cpde.ecnu.edu.cn}}\\ {Center for PDE, East China Normal
University,}\\{\small 500 Dongchuan Road, Minhang 200241,
Shanghai, P. R. China.} }

\date{}
\maketitle{}

\begin{abstract}
In this paper, the global dynamics of two-species Lotka-Volterra competition models with nonlocal dispersals is studied. Under the assumption that dispersal kernels are symmetric, we prove that except for very special situations, local stability of semi-trivial steady states implies global stability, while when both semi-trivial steady states are locally unstable, the  positive steady state exists and is globally stable. Moreover, our results cover the case that competition coefficients are location-dependent and dispersal strategies are mixture of local and nonlocal dispersals.

\end{abstract}

{\bf Keywords}: nonlocal dispersal, local stability, global dynamics
\vskip3mm {\bf MSC (2010)}: Primary: 45G15, 45M05; Secondary: 45M10, 45M20.

%\keywords{population dynamics, optimization, spatial heterogeneity}
%\subjclass[2010]{Primary: 35B30, 35B99; Secondary: 35K57, 35B40}
%\email{mybxl110@163.com, xqhe@math.cts.nthu.edu.tw, fli@cpde.ecnu.edu.cn}

\section{Introduction}

Dispersal is an important feature of the life histories of many organisms and thus has been a central topic in ecology. In 1951, random diffusion was introduced to model dispersal strategies \cite{Skellam} and there are tremendous studies in this direction \cite{CCbook}, \cite{OL}. In recent years, nonlocal dispersal, which describes the movements of organisms between non-adjacent spatial locations, has attracted lots of attentions from both biologists and mathematicians. We refer \cite{CCLR, CCEM, HG,HMMV,   LHDGHLMUM,  LMNC} and references therein for more details.

The purpose of this paper is to understand the role played by spatial heterogeneity and nonlocal dispersals in the ecology of competing species. This encourages  us to  investigate the global dynamics of the following   models
\begin{equation}\label{original}
\begin{cases}
u_t= d \mathcal{K}[u]  +u(m(x)- u- c v) &\textrm{in } \Omega\times[0,\infty),\\
v_t= D \mathcal{P}[v]  +v(M(x)-b u-  v) &\textrm{in } \Omega\times[0,\infty),
%u(x,0)=u_0(x),~v(x,0)=v_0(x)  &\textrm{in } \Omega,
\end{cases}
\end{equation}
where $\Omega$ is a smooth bounded domain in $\mathbb R^n$, $n\geq 1$.
In this model, $u(x,t)$, $v(x,t)$ are the population densities of two competing species, $d, D>0$ are their dispersal rates respectively. The functions $m(x)$, $M(x)$ represent their intrinsic growth rates,
$b,\ c>0$ in $\bar\Omega$ are interspecific competition coefficients.
%$b_1(x),\ c_2(x)>0$ in $\bar\Omega$ represent self-regulation of the respective species.
Two types of  nonlocal operators $\mathcal{K}$ and $\mathcal{P}$ will be considered in this paper. For $\phi\in C(\bar\Omega)$, define
\begin{itemize}
\item[\textbf{(N)}] $\mathcal{K}[\phi] = \int_{\Omega}k(x,y)\phi(y)dy- \int_{\Omega}k(y,x) dy \phi(x)$,  $\mathcal{P}[\phi] = \int_{\Omega}p(x,y)\phi(y)dy- \int_{\Omega}p(y,x) dy \phi(x)$,
\item[\textbf{(D)}] $\mathcal{K}[\phi] = \int_{\Omega}k(x,y)\phi(y)dy-  \phi(x)$,  $\mathcal{P}[\phi] = \int_{\Omega}p(x,y)\phi(y)dy- \phi(x) $,
\end{itemize}
where the kernels $k(x, y)$, $p(x,y)$ describe the rate at which organisms move from point $y$ to point $x$. Types \textbf{(N)} and \textbf{(D)} correspond to no flux boundary condition and lethal boundary condition respectively with local dispersal. See \cite{HMMV} for the details.

Throughout this paper, unless designated otherwise, we assume that
\begin{itemize}
\item[\textbf{(C1)}]  $ m(x), M(x)\in C(\bar{\Omega})$  are nonconstant.
\item[\textbf{(C2)}]  $k(x,y)$, $p(x,y)\in C(\mathbb R^n\times \mathbb R^n)$ are nonnegative and  $k(x,x), \  p(x,x)>0$ in $\mathbb R^n$. Moreover,  $\int_{\mathbb R^n} k(x,y)dy= \int_{\mathbb R^n} k(y,x)dy=1$ and $\int_{\mathbb R^n} p(x,y)dy= \int_{\mathbb R^n} p(y,x)dy=1$.
\item[\textbf{(C3)}] $k(x,y)$, $p(x,y)\in C(\mathbb R^n\times \mathbb R^n)$ are symmetric, i.e., $k(x,y)=k(y,x)$, $p(x,y)=p(y,x)$.
\end{itemize}

To understand the motivation of our studies, two  observations related to system (\ref{original}) are worth mentioning.
First, if the function $m=M$ were a constant, when $0<b,  c<1$,
(\ref{original}) has a unique positive
constant steady state for any diffusion rates $d$ and $D$, which is  globally stable among all positive
continuous initial data.
Secondly, when $0<b,  c<1$ and $m=M$, without the diffusion terms, i.e., $d=D=0$, (\ref{original}) becomes a system of two ordinary differential
equations, whose solutions converge to
$$
\left( {1-b\over 1-bc}m_+(x), {1-c\over 1-bc}m_+(x)\right)\ \ \textrm{for every}\ x\in\Omega,
$$
where $m_+(x)=\max \{ m(x), 0\}$, among all positive
continuous initial data.

Moreover, it is well known interactions between random diffusion and spatial heterogeneity could create some very different  phenomena in
population dynamics. See \cite{HeNi}, \cite{LamNi}, \cite{Lou} and the references therein. However, till now the studies for the corresponding nonlocal models are quite limited. See \cite{BaiLi2015}, \cite{HNShen2012} and the references therein.

Not only in this paper the global dynamics of system (\ref{original}) is investigated, but also the approaches can be employed to study more general systems with either location-dependent coefficients or mixed dispersal strategies.

To better demonstrate our main results and techniques, some explanations are in place. Let $(U(x),V(x))$ denote a nonnegative steady state of (\ref{original}),  then there are {\it at most} three possibilities:
\begin{itemize}
\item $(U,V)= (0,0)$ is called a {\it trivial steady state};
\item $(U,V)=(u_d, 0)$ or $(U,V)=(0, v_D)$ is called a {\it semi-trivial steady state}, where $u_d$, $v_D$ respectively are the positive solutions to single-species models
    \begin{equation}\label{singled}
    d \mathcal{K}[u]  +u(m(x)- u)=0,
    \end{equation}
    and
    \begin{equation}\label{singleD}
     D \mathcal{P}[v]  +v(M(x)- v)=0
    \end{equation}
\item $U>0,\ V>0$, and we call $(U,V)$ a {\it coexistence/positive steady state}.
\end{itemize}

First, for the existence of semi-trivial steady states to competition models, the existence of  positive solutions to single-species models need be studied. In fact,  this issue is of independent interests and has been studied in \cite{BZh} for symmetric operators and in \cite{Coville2010} for nonsymmetric operators of special type. For the convenience of readers, a general result and its detailed proof concerning nonlocal operators satisfying \textbf{(C2)} is included in Appendix A.

From the viewpoint of biology and mathematics, more interesting and challenging issue is to describe the global dynamics of (\ref{original}) based on its local stability when (\ref{original}) admits two semi-trivial steady states. This means two species can both survive without competitors in certain environment, then when they compete for survivals, their dispersal strategies, competition abilities, spatial heterogeneity will  interact with each other and different outcomes might happen. Therefore, our main results will be restricted to this situation, i.e., both semi-trivial steady states exist. To be more precise, we repeat the conjecture proposed in \cite{BaiLi2015}:
\begin{con}
The locally stable steady state  is globally asymptotically stable.
\end{con}
Roughly speaking, to verify this conjecture could be regarded as our intention. Indeed, this conjecture is initially proposed in \cite{Lou} for models with random diffusion and recently has been completely resolved by \cite{HeNi}  provided that $0<bc\leq 1$. It is worth pointing out that in \cite{HeNi}, an intrinsic relations among positive steady states and principal eigenfunctions of linearized problems are discovered, which motivates our work in this paper.
However, due to the lack of regularity in nonlocal models, we have difficulties determining the local stability  by linearized analysis, since principal eigenvalue might not exist. For single-species models or semi-trivial steady states of competition models, it is known that this issue can be resolved by perturbation arguments and spectral analysis. See \cite{BZh}, \cite{HMMV} and so on. Unfortunately, as far as we are concerned, spectral analysis of linearized problem at positive steady states seems too difficult and there are no progress in this direction.  Therefore, instead of verifying this conjecture,   {\it we aim to classify the global dynamics of competition systems based on local stability of semi-trivial steady states.}

%It is known that
For competition models with {\it local dispersals}, it is known that to show global dynamics,  it suffices to demonstrate that every positive steady state is locally stable. See \cite{HsuSW1998} and references therein, where the compactness of solutions orbits is a necessary condition. However, according to previous discussion, not only this property might not hold but also  we need avoid  analyzing local stability of  positive steady state due to its difficulty. Notice that two-species competition models have quite clear solution structure. To be more specific, if one semi-trivial steady state is locally stable while the other is locally unstable, and there is no positive steady state, then the stable one will be globally stable. If two semi-trivial steady states are both locally unstable, then there exists at least one stable positive steady state and moreover the uniqueness will  imply global stability. Thus, in order to handle models with {\it nonlocal dispersals}, we turn our attention back to the well-known solution structures and verify either the nonexistence  or uniqueness of positive steady state directly based on characteristics of nonlocal operators and arguments by contradiction.

Simply for completeness, the global dynamics of system (\ref{original}) with at most one semi-trivial steady state is discussed in Appendix B.
According to Theorem \ref{thm-nonsymmetric}, under this circumstance, the global dynamics of system (\ref{original}) can be characterized completely for both symmetric and nonsymmetric kernels and for all $b,c>0$. Biologically, this result is clearly reasonable. If a species cannot survive without competitors, it will have no chance to survive when another competitor is introduced, regardless of their dispersal strategies, competition abilities, spatial heterogeneity and so on.

%The global dynamics of system (\ref{original}) is partially classified in \cite{BaiLi2015} for a simplified nonlocal dispersals.
%\begin{conj}
%The locally stable steady state  is globally asymptotically stable.
%\end{conj}
%This conjecture is first proposed by Lou in \cite{Lou} for competition model with local diffusions.
{\it The first main result} in this paper gives a complete classification of  the global dynamics to the competition system (\ref{original})  provided that $0<bc\leq 1$.
% under assumptions \textbf{(C1)}, \textbf{(C2)} and \textbf{(C3)}.
\begin{thm}\label{thm-main}
Assume that \textbf{(C1)}, \textbf{(C2)}, \textbf{(C3)} hold and $0<bc\leq 1$. Also assume that (\ref{original}) admits two semi-trivial steady states $(u_d, 0)$ and $(0,v_D)$. Then for the global dynamics of the system (\ref{original}) with nonlocal operators $\mathcal{K}$ and $\mathcal{P}$ defined as either type \textbf{(N)} or \textbf{(D)}, there exist exactly four  cases:
\begin{itemize}
\item[(i)] If both $(u_d, 0)$ and $(0,v_D)$ are locally unstable, then the system (\ref{original}) admits a unique positive steady state, which is globally asymptotically stable;
\item[(ii)] If $(u_d, 0)$ is locally unstable and $(0,v_D)$ is locally stable or neutrally stable, then $(0,v_D)$ is globally asymptotically stable;
\item[(iii)] If $(u_d, 0)$ is locally stable or neutrally stable and $(0,v_D)$ is locally unstable, then $(u_d, 0)$ is globally asymptotically stable;
\item[(iv)]  If both $(u_d, 0)$ and $(0,v_D)$ are locally stable or neutrally stable, then $bc=1$, $bu_d = v_D$ and  system (\ref{original}) has a continuum of steady states $\{(su_d, (1-s)v_D),\ 0\leq s\leq 1\}$.
\end{itemize}
\end{thm}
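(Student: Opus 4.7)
The plan is to reduce the theorem via the monotone dynamical systems (MDS) framework to a classification of positive steady states, then analyze positive steady states using a nonlocal Picone-type identity tailored to symmetric kernels.

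\textbf{Reduction via MDS.} System (\ref{original}) is competitive, so on $C(\bar\Omega)^2$ equipped with the order $(u_1,v_1) \preceq (u_2,v_2) \Leftrightarrow u_1 \leq u_2$ and $v_1 \geq v_2$, it generates a monotone semi-flow. Well-posedness, comparison principles, and the existence of invariant rectangles for nonlocal operators of types \textbf{(N)} or \textbf{(D)} are standard. Classical Hsu--Smith--Waltman (or Dancer--Hess) theory then reduces the theorem to the following: cases (ii)--(iii) follow from \emph{nonexistence} of any positive steady state; case (i) follows from \emph{existence and uniqueness}; case (iv) requires only the structural identities $bc=1$ and $bu_d \equiv v_D$ together with a direct verification of the continuum.

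\textbf{Key tool: a nonlocal Picone inequality.} The central analytic device is, for any positive $u,w \in C(\bar\Omega)$ and any symmetric kernel,
\begin{equation*}
\int_\Omega \mathcal{K}[u]\,\frac{w^2}{u}\,dx \;\geq\; \int_\Omega w\,\mathcal{K}[w]\,dx,
\end{equation*}
with equality iff $w/u$ is constant. This follows from the symmetry of $k$ and the pointwise AM--GM bound $2w(x)w(y) \leq (u(y)/u(x))w^2(x) + (u(x)/u(y))w^2(y)$. Equivalently, for any positive test function $\phi$ the principal eigenvalue $\lambda_1(d\mathcal{K}+q)$ is bounded below by the Rayleigh quotient $\int(d\mathcal{K}[\phi]+q\phi)\phi\,dx / \int\phi^2\,dx$, attained at $u_d$ when $q = m - u_d$.

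\textbf{Case (iv) and uniqueness in case (i).} For case (iv), testing $\lambda_1(d\mathcal{K} + m - cv_D) \leq 0$ against $\phi = u_d$ and using $d\mathcal{K}[u_d] = u_d(u_d-m)$ yields $\int_\Omega u_d^3\,dx \leq c\int_\Omega v_D u_d^2\,dx$; symmetrically, $\int_\Omega v_D^3\,dx \leq b\int_\Omega u_d v_D^2\,dx$. Two Cauchy--Schwarz applications (e.g., $\int u_d^2 v_D \leq (\int u_d^3)^{1/2}(\int u_d v_D^2)^{1/2}$) combine these into $bc \geq 1$; together with the hypothesis $bc \leq 1$ we obtain $bc = 1$, and the Cauchy--Schwarz equality cases force $u_d \equiv cv_D$ and $v_D \equiv b u_d$. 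A direct substitution then verifies that $(s u_d, (1-s)v_D)$ is a steady state for every $s \in [0,1]$. For case (i), MDS yields ordered minimal and maximal positive steady states $(u_m,v_m) \preceq (u_M,v_M)$; applying the Picone identity to the pair $(u_m,u_M)$ in the $u$-equations and to $(v_M,v_m)$ in the $v$-equations and summing, the bound $bc \leq 1$ forces $u_m \equiv u_M$ and $v_m \equiv v_M$, giving uniqueness. Existence in case (i) is standard in MDS once both semi-trivials are unstable.

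\textbf{Cases (ii), (iii), and the main obstacle.} For case (ii), assume for contradiction that a positive steady state $(u^*,v^*)$ exists. Testing $\lambda_1(d\mathcal{K}+m-cv_D) \leq 0$ with $u^*$ and $\lambda_1(D\mathcal{P}+M-bu_d) > 0$ with $v^*$ produces integral inequalities linking $(u^*,v^*)$ to $(u_d,v_D)$, one of which is strict because $(u_d,0)$ is unstable; balancing them against $bc \leq 1$ in the spirit of case (iv) yields a contradiction, and case (iii) is symmetric. I expect the main obstacle to be that, unlike the random-diffusion setting treated in \cite{HeNi}, principal eigenvalues and eigenfunctions of the linearizations at positive steady states need not be attained and lack pointwise regularity, so every comparison must proceed via integral Picone identities with carefully chosen test functions. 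The subtlest point is engineering the strict contradiction in case (ii) without any pointwise spectral information at a putative interior steady state, and ensuring that the $bc = 1$ boundary case is handled in case (i) uniqueness (where the Cauchy--Schwarz combination becomes degenerate and must be supplemented by the strict instability of both semi-trivials).
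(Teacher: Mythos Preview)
Your overall architecture is the paper's: reduce via monotone dynamics (their Theorem~2.2) to a count of positive steady states, then exploit kernel symmetry through integral identities that combine under $bc\leq 1$. Your treatment of case~(iv) via Cauchy--Schwarz is a valid alternative to the paper's cubic identity $\int_\Omega(bu_d-v_D)^2(bu_d+v_D)\,dx\leq 0$; both routes start from the same two inequalities $\int u_d^3\leq c\int v_Du_d^2$ and $\int v_D^3\leq b\int u_dv_D^2$.

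There is, however, a real gap in your case~(ii). You propose to test $\mu_{(u_d,0)}>0$ with $v^*$ to obtain a strict integral inequality. But $\mu_{(u_d,0)}$ is a \emph{supremum} of Rayleigh quotients, so plugging in $v^*$ only yields $\mu_{(u_d,0)}\geq(\text{quotient at }v^*)$; from $\mu_{(u_d,0)}>0$ nothing follows about the sign of that quotient, and no strict inequality is produced. The paper does not use $\mu_{(u_d,0)}>0$ as a test at all. It obtains the second inequality $\int_\Omega(bu+z)z^2\,dx\geq 0$ (with $z=v-v_D$) directly from the steady-state equations for $v$ and $v_D$ via the same symmetric-kernel identity used in the uniqueness argument. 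Only \emph{after} these two inequalities force $bc=1$ and $bu_d=v_D$ does the paper invoke $\mu_{(u_d,0)}>0$, and then merely to note that this contradicts the case-(iv) classification already established.

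There is also a mismatch in your uniqueness argument. The Picone inequality you state, $\int\mathcal{K}[u]\,w^2/u\geq\int w\,\mathcal{K}[w]$, applied to two ordered steady states yields inequalities with weights $u^2$ and $(u^*)^2$ (hence $(u+u^*)w$ after subtraction), \emph{not} the weights $w^2$ and $z^2$ that the paper needs. Those weights are what make the cubic collapse
\[
0\;\leq\;c^3\!\int_\Omega(bw+z)z^2\,dx-\int_\Omega(w+cz)w^2\,dx\;\leq\;\int_\Omega(w+cz)^2(cz-w)\,dx
\]
work. The paper gets them from a different identity: writing $d\bigl(u\mathcal{K}[w]-w\mathcal{K}[u]\bigr)=uu^*(w+cz)$, multiplying by $w^2/(uu^*)$, and then showing via symmetry that
\[
\int_\Omega\bigl(u^*\mathcal{K}[u]-u\mathcal{K}[u^*]\bigr)\frac{w^2}{uu^*}\,dx
=\tfrac12\!\int_\Omega\!\int_\Omega k(x,y)\bigl[u^*(x)u(y)-u(x)u^*(y)\bigr]^2\!\Bigl(\tfrac{1}{u(x)u(y)}-\tfrac{1}{u^*(x)u^*(y)}\Bigr)dy\,dx\leq 0
\]
when $u>u^*$. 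This is the nonlocal analogue of the He--Ni device and is not the standard Picone bound; your plan ``apply Picone and sum'' needs this sharper identity (and its companion for $z$) to close both the uniqueness in case~(i) and the nonexistence in cases~(ii)--(iii).
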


Equipped with the techniques developed in the study of system (\ref{original}), we further investigate the competition system with location-dependent competition coefficients and self-regulations
\begin{equation}\label{general-bc}
\begin{cases}
u_t= d \mathcal{K}[u]  +u(m(x)-b_1(x)u- c(x)v) &\textrm{in } \Omega\times[0,\infty),\\
v_t= D \mathcal{P}[v]  +v(M(x)-b(x)u- c_2(x)v) &\textrm{in } \Omega\times[0,\infty),\\
\end{cases}
\end{equation}
where $b_1,\ c_2$ represent self-regulations. Assume that
\begin{itemize}
\item[\textbf{(C4)}]  $b(x), c(x), b_1(x), c_2(x) \in C(\bar{\Omega})$.
\end{itemize}

{\it The second main result} in this paper completely classifies the global dynamics of system (\ref{general-bc}) provided that
\begin{equation}\label{general-condition}
\max_{\bar\Omega} b(x) \cdot \max_{\bar\Omega} c (x)\leq \min_{\bar\Omega} b_1(x) \cdot \min_{\bar\Omega} c_2 (x).
\end{equation}
%under assumptions \textbf{(C1)}, \textbf{(C2)}, \textbf{(C3)} and \textbf{(A4)}.
\begin{thm}\label{thm-main-locationdependent}
Assume that \textbf{(C1)}, \textbf{(C2)}, \textbf{(C3)}, \textbf{(C4)} hold and (\ref{general-condition}) is valid. Also assume that (\ref{general-bc}) admits two semi-trivial steady states $(u^*_d, 0)$ and $(0,v^*_D)$. Then  the global dynamics of the system (\ref{general-bc}) with nonlocal operators $\mathcal{K}$ and $\mathcal{P}$ defined as either type \textbf{(N)} or \textbf{(D)} can be classified as follows
\begin{itemize}
\item[(i)] If both $(u^*_d, 0)$ and $(0,v^*_D)$ are locally unstable, then the system (\ref{general-bc}) admits a unique positive steady state, which is globally asymptotically stable;
\item[(ii)] If $(u^*_d, 0)$ is locally unstable and $(0,v^*_D)$ is locally stable or neutrally stable, then $(0,v^*_D)$ is globally asymptotically stable;
\item[(iii)] If $(u^*_d, 0)$ is locally stable or neutrally stable and $(0,v^*_D)$ is locally unstable, then $(u^*_d, 0)$ is globally asymptotically stable;
\item[(iv)]  If both $(u^*_d, 0)$ and $(0,v^*_D)$ are locally stable or neutrally stable, then $b(x), c(x), b_1(x), c_2(x)$ must be constants, $bc=b_1c_2$ and $bu^*_d = c_2v^*_D$. Moreover, the system (\ref{general-bc}) has a continuum of steady states $\{(su^*_d, (1-s)v^*_D),\ 0\leq s\leq 1\}$.
\end{itemize}
\end{thm}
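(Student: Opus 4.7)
The plan is to adapt the strategy of Theorem \ref{thm-main} to the spatially varying coefficients, with condition \eqref{general-condition} playing precisely the role that $bc\leq 1$ plays in the constant-coefficient setting: it is the hypothesis under which the key integral identities retain a definite sign. The symmetry \textbf{(C3)} ensures that the scalar linearizations at the two semi-trivial steady states, $d\mathcal{K}+m-c(x)v_D^*$ and $D\mathcal{P}+M-b(x)u_d^*$, are self-adjoint, so their principal eigenvalues are characterized variationally and their signs encode local stability. Following the philosophy described in the introduction, I would avoid any spectral analysis at a hypothetical coexistence steady state and instead verify nonexistence or uniqueness of positive steady states directly.

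First, for cases (ii) and (iii), I would prove nonexistence of a positive steady state. Assume, for contradiction, that $(U,V)$ is a positive steady state while $(u_d^*,0)$ is unstable and $(0,v_D^*)$ is locally or neutrally stable. Starting from the steady-state equations
\[
d\mathcal{K}[U] = U\bigl(b_1(x)U + c(x)V - m(x)\bigr),\qquad D\mathcal{P}[V] = V\bigl(b(x)U + c_2(x)V - M(x)\bigr),
\]
together with the single-species equations for $u_d^*$ and $v_D^*$, I would test these identities against suitably chosen positive functions (either $u_d^*,v_D^*$ themselves or approximate principal eigenfunctions for the linearizations at the two semi-trivial states), exploit the symmetry of the kernels to transfer $\mathcal{K}$ and $\mathcal{P}$ between factors, and apply \eqref{general-condition} in the pointwise form $b(x)c(x)\leq b_1(x)c_2(x)$ to dominate the cross term. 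The resulting chain of inequalities contradicts the assumed instability/stability pair. Once positive steady states are ruled out, the standard competitive-system dichotomy forces the stable semi-trivial to attract every positive orbit, yielding (ii) and (iii).

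For case (i), both semi-trivials being linearly unstable produces at least one positive steady state by monotone-systems theory; uniqueness is the substantive point. I would assume $(U_1,V_1)$ and $(U_2,V_2)$ are two positive steady states, test the resulting ratio equations against the natural pairings $U_1U_2$ and $V_1V_2$, and again use symmetry together with \eqref{general-condition} to force the ratios $U_1/U_2$ and $V_1/V_2$ to be constant; the nonconstancy of $m,M$ imposed by \textbf{(C1)} then collapses the pair, and global stability follows from the same dichotomy. Case (iv) is a rigidity statement: if both semi-trivials are locally or neutrally stable, rerunning the argument from Stage~1 forces every inequality to be an equality. Equality throughout \eqref{general-condition} compels $b,c,b_1,c_2$ to be constants with $bc=b_1c_2$, the equality in the test-function identity then forces the proportionality $bu_d^*\equiv c_2v_D^*$, after which direct substitution verifies that $(su_d^*,(1-s)v_D^*)$ is a steady state for every $s\in[0,1]$.

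The principal obstacle is the uniqueness step in case (i). Unlike the PDE setting there is no maximum principle or regularity theory at a positive steady state, so the correct pair of test functions must be chosen carefully, and it is the interplay between the symmetry \textbf{(C3)} and the sharp form of \eqref{general-condition} that makes the resulting integral identity close up with a definite sign. A secondary technical point is that a principal eigenvalue of a self-adjoint nonlocal operator of type \textbf{(N)} or \textbf{(D)} need not be attained in $L^2(\Omega)$; throughout the three stages one must be prepared to work with sequences of approximate eigenfunctions, but this is a bookkeeping issue rather than a genuine obstruction.
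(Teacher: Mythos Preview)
Your high-level plan matches the paper's: establish nonexistence of positive steady states in cases (ii)--(iii), uniqueness in case (i), and rigidity in case (iv), then invoke the monotone-system dichotomy. But two of your proposed mechanisms would not close as stated.

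\medskip

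\textbf{The role of condition \eqref{general-condition}.} You write that you will ``apply \eqref{general-condition} in the pointwise form $b(x)c(x)\leq b_1(x)c_2(x)$ to dominate the cross term.'' This is where the plan slips. The integral identities one actually obtains (by the same weighted symmetrization as in \eqref{pf-nonpositive}--\eqref{pf-key}) are
\[
\int_{\Omega}\bigl(b_1(x)w+c(x)z\bigr)w^2\,dx\leq 0,\qquad \int_{\Omega}\bigl(b(x)w+c_2(x)z\bigr)z^2\,dx\geq 0,
\]
with $w>0$, $z<0$. These cannot be combined using only the pointwise product inequality, because the trick in \eqref{pf-keyrelation} requires multiplying one inequality by a \emph{constant} cube and subtracting to produce the factored integrand $(w+\tilde c z)^2(\tilde c z-w)$. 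The paper uses the full separated strength of \eqref{general-condition} to first pass to constants,
\[
\int_{\Omega}\bigl([\min_{\bar\Omega}b_1]\,w+[\max_{\bar\Omega}c]\,z\bigr)w^2\,dx\leq 0,\qquad \int_{\Omega}\bigl([\max_{\bar\Omega}b]\,w+[\min_{\bar\Omega}c_2]\,z\bigr)z^2\,dx\geq 0,
\]
and only then runs the constant-coefficient algebra of \eqref{pf-keyrelation}. Equality throughout is what forces $b,c,b_1,c_2$ to be constants with $bc=b_1c_2$. Your pointwise version does not give you the constant multiplier needed for the cubic cancellation, so the chain of inequalities does not reach a contradiction.

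\medskip

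\textbf{The uniqueness step in case (i).} Your description---``test the resulting ratio equations against the natural pairings $U_1U_2$ and $V_1V_2$\ldots force the ratios $U_1/U_2$ and $V_1/V_2$ to be constant; the nonconstancy of $m,M$ \ldots\ then collapses the pair''---does not correspond to the mechanism that actually works. The paper does \emph{not} use nonconstancy of $m,M$ here. With two ordered positive steady states $(u,v)$ and $(u^*,v^*)$, set $w=u-u^*$, $z=v-v^*$; the correct test weight is $w^2/(uu^*)$ (respectively $z^2/(vv^*)$), and the symmetry \textbf{(C3)} turns the left-hand side into a sign-definite double integral exactly as in \eqref{pf-exchange-xy-1}--\eqref{pf-key}. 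This yields the two displayed inequalities above, and after the constant-replacement step and \eqref{pf-keyrelation}, equality everywhere forces the degenerate configuration $bc=b_1c_2$, $bu_d^*=c_2v_D^*$---which makes both semi-trivials neutrally stable, contradicting the hypothesis of (i). No ratio argument or appeal to \textbf{(C1)} is needed (or would suffice).

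\medskip

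For case (ii), the same pair of inequalities is obtained by taking $(u^*,v^*)=(0,v_D^*)$: the first comes from plugging $u$ into the variational characterization of $\nu_{(0,v_D^*)}\leq 0$, and the second from the kernel symmetrization for $v$ versus $v_D^*$. Your suggestion of testing against $u_d^*,v_D^*$ or approximate eigenfunctions is not what produces the needed sign; the right test function for the $\nu$-inequality is the $u$-component of the hypothetical positive steady state itself.
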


This greatly extend results in \cite{HNShen2012}, where for certain cases,  coexistence/extinction phenomena is studied.

{\it The last main result} in this paper is about the competition system with mixed dispersal strategies
\begin{equation}\label{general-mix}
\begin{cases}
u_t= d \hat{\mathcal{K}}_{\alpha}[u]  +u(m(x)-b_1(x)u- c(x)v) &\textrm{in } \Omega\times[0,\infty),\\
v_t= D \hat{\mathcal{P}}_{\beta}[v]  +v(M(x)-b(x)u- c_2(x)v) &\textrm{in } \Omega\times[0,\infty),\\
(1-\alpha)\partial u/\partial \nu =(1-\beta) \partial v/\partial \nu=0   &\textrm{on } \partial\Omega,
\end{cases}
\end{equation}
where $\nu$ denotes the unit outer normal vector on $\partial\Omega$, for $\phi\in C(\bar\Omega)$
$$
\hat{\mathcal{K}}_{\alpha}[\phi] = \alpha \left\{ \int_{\Omega}k(x,y)\phi(y)dy- \int_{\Omega}k(y,x) dy \phi(x)  \right\}+(1-\alpha) \Delta \phi,
$$
$$
\hat{\mathcal{P}}_{\beta}[\phi] = \beta \left\{ \int_{\Omega}p(x,y)\phi(y)dy- \int_{\Omega}p(y,x) dy \phi(x)  \right\}+(1-\beta) \Delta \phi,
$$
and $0\leq \alpha,\beta\leq 1$. The global dynamics of this system can be classified when (\ref{general-condition}) holds.
\begin{thm}\label{thm-main-mix}
Assume that \textbf{(C1)}, \textbf{(C2)}, \textbf{(C3)}, \textbf{(C4)} hold and (\ref{general-condition}) is valid. Also assume that (\ref{general-mix}) admits two semi-trivial steady states $(\hat{u}_d, 0)$ and $(0,\hat{v}_D)$. Then  there exist exactly four  cases:
\begin{itemize}
\item[(i)] If both $(\hat{u}_d, 0)$ and $(0,\hat{v}_D)$ are locally unstable, then the system (\ref{general-mix}) admits a unique positive steady state, which is globally asymptotically stable;
\item[(ii)] If $(\hat{u}_d, 0)$ is locally unstable and $(0,\hat{v}_D)$ is locally stable or neutrally stable, then $(0,\hat{v}_D)$ is globally asymptotically stable;
\item[(iii)] If $(\hat{u}_d, 0)$ is locally stable or neutrally stable and $(0,\hat{v}_D)$ is locally unstable, then $(\hat{u}_d, 0)$ is globally asymptotically stable;
\item[(iv)]  If both $(\hat{u}_d, 0)$ and $(0,\hat{v}_D)$ are locally stable or neutrally stable, then $b(x), c(x), b_1(x), c_2(x)$ must be constants, $bc=b_1c_2$, $b\hat{u}_d = c_2\hat{v}_D$ and the system (\ref{general-mix}) has a continuum of steady states $\{(s\hat{u}_d, (1-s)\hat{v}_D),\ 0\leq s\leq 1\}$.
\end{itemize}
\end{thm}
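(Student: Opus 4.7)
The plan is to follow verbatim the blueprint established for Theorem \ref{thm-main-locationdependent}, since the mixed operators $\hat{\mathcal{K}}_{\alpha}$ and $\hat{\mathcal{P}}_{\beta}$ retain the two features that drive that argument. First, under the no-flux boundary condition $(1-\alpha)\partial u/\partial \nu = 0$, both the symmetric nonlocal piece (by \textbf{(C3)}) and the Laplacian piece are self-adjoint on $L^2(\Omega)$, so $\hat{\mathcal{K}}_{\alpha}$ and $\hat{\mathcal{P}}_{\beta}$ are self-adjoint as well. Second, a strong maximum principle holds: any nonnegative nontrivial steady state is strictly positive in $\bar\Omega$ (the nonlocal term propagates positivity on the interior through the kernel, and when $\alpha<1$ the Laplacian with Neumann data does the rest; when $\alpha=1$ we are in the purely nonlocal case already treated). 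Existence and uniqueness of $\hat{u}_d$ and $\hat{v}_D$ parallels the discussion in Appendix A once these two ingredients are in place.

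For cases (ii) and (iii), it suffices to rule out positive steady states. Suppose in case (ii) that $(U,V)$ is a coexistence state. I would test the $U$-equation against $\hat{u}_d/U$ and the $V$-equation against $\hat{v}_D/V$ (or the appropriate symmetric combinations used for Theorem \ref{thm-main-locationdependent}), integrate over $\Omega$, and use self-adjointness of $\hat{\mathcal{K}}_{\alpha}$, $\hat{\mathcal{P}}_{\beta}$ together with the algebraic inequality \eqref{general-condition}. The resulting identity contradicts the instability of $(\hat{u}_d,0)$ paired with the (neutral) stability of $(0,\hat{v}_D)$ unless all inequalities are equalities. Once positive steady states are excluded, the standard monotone dynamical systems trichotomy for two-species competition then gives convergence to $(0,\hat{v}_D)$.

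For case (i), existence of a positive steady state when both semi-trivial states are unstable follows from a degree theoretic / bifurcation argument identical to the one employed for \eqref{general-bc}; uniqueness is obtained by assuming two positive steady states $(U_1,V_1)$, $(U_2,V_2)$, applying the same self-adjoint testing as above, and extracting a contradiction from \eqref{general-condition}. Global asymptotic stability of the unique coexistence state then follows from the trichotomy. Case (iv) is the rigidity case: tracing the equality cases that appear in the testing identities of (i)--(iii) forces all inequalities to be equalities, which in turn forces $b,c,b_1,c_2$ to be constants, $bc=b_1c_2$ and $b\hat{u}_d = c_2\hat{v}_D$; one then verifies directly that $(s\hat{u}_d,(1-s)\hat{v}_D)$ solves \eqref{general-mix} for every $s\in[0,1]$.

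The main obstacle I anticipate is bookkeeping the integration-by-parts step for the mixed operator: the nonlocal part gives a double integral with the kernel symmetrization, while the Laplacian part gives a gradient square term after Green's formula with the Neumann boundary data. Both contributions are nonpositive in the right testing, so their sum cooperates with \eqref{general-condition} rather than against it; however, one must be careful that the gradient term does not vanish on the wrong set, and that in the rigidity step of case (iv) the vanishing of the gradient part forces $U/\hat{u}_d$ and $V/\hat{v}_D$ to be constant on $\Omega$, which when combined with the equality cases in the nonlocal part yields the claimed structural identities. Provided this bookkeeping is handled exactly as in the proof of Theorem \ref{thm-main-locationdependent}, no new genuinely nonlocal difficulty arises, and Theorem \ref{thm-main-mix} follows.
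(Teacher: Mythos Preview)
Your proposal is correct and follows essentially the same route as the paper: the heart of the argument is that the testing identities behind Proposition~\ref{prop-steadystates} persist for the mixed operator, with the Laplacian piece contributing (after Green's formula with the Neumann data) the term $d(1-\alpha)\int_{\Omega}|u\nabla\hat u-\hat u\nabla u|^{2}\bigl(u^{-2}-\hat u^{-2}\bigr)\,dx$ alongside the symmetrized nonlocal double integral, both of which carry the correct sign when $u>\hat u$. One small correction: existence of a positive steady state in case~(i) is obtained in the paper via the monotone upper/lower solution framework of Theorem~\ref{thm-monotone} (see the remark following it), not by a degree-theoretic or bifurcation argument.
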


This paper is organized as follows. Section 2 provides some background properties and  a general result concerning global dynamics of two-species competition models, regardless of whether the dispersal kernels are symmetric or not. Section 3 is devoted to the proof of Theorem \ref{thm-main} and  proofs of Theorems \ref{thm-main-locationdependent} and \ref{thm-main-mix} are included in Section 4.

\section{Preliminaries}
This section is devoted to the describing the scheme in establishing the main results. It is worth pointing out that throughout this section, assumption \textbf{(C3)} is not imposed, in other words, the nonlocal operators can be {\it nonsymmetric}.

From now on, for simplicity, we combine two types of nonlocal operators $\mathcal{K}$ and $\mathcal{P}$ as follows
\begin{equation}\label{K-kernel}
\mathcal{K}[u] = \int_{\Omega}k(x,y)u(y)dy- a_d(x)  u(x),
\end{equation}
\begin{equation}\label{P-kernel}
\mathcal{P}[u] = \int_{\Omega}p(x,y)u(y)dy- a_D(x)   u(x),
\end{equation}
where for type \textbf{(N)}, $a_d(x)=\int_{\Omega}k(y,x) dy,\ a_D(x)=\int_{\Omega}p(y,x) dy$, while for type \textbf{(D)}, $a_d(x)\equiv 1,\ a_D(x)\equiv 1$.

For clarity, we will focus on competition model (\ref{original}) and always assume that
there exist two semi-trivial steady states $(u_d, 0)$ and $(0,v_D)$.
%To study the global asymptotic behavior of the competition system (\ref{original}),
%we will follow the scheme described in \cite[Section 2]{BaiLi2015}.

First of all,  the linearized operator of (\ref{original})  at $(u_d, 0)$ is
\begin{equation}\label{lin-d}
\mathcal{L}_{(u_d,0)} {\phi\choose\psi}={d\mathcal{K}[\phi]+[m(x)-2u_d]\phi-cu_d\psi \choose D\mathcal{P}[\psi]+[M(x)-bu_d]\psi}.
\end{equation}
Also,  the linearized operator of (\ref{original})  at $(0, v_D)$ is
\begin{equation}\label{lin-D}
\mathcal{L}_{(0, v_D)} {\phi\choose\psi}={ d \mathcal{K}[\phi] +[m (x)-cv_D]\phi  \choose D\mathcal{P}[\psi] +[M(x)-2v_D]\psi-bv_D\phi }.
\end{equation}
%Similar to (\ref{principaleigenvalue}), define
%\begin{eqnarray}\label{principaleigenvalue-(1,0)}
%&&  \mu_{(u_d,0)} \\
%&=&\sup \left\{  \mu\in \mathbb R \ \big |\ \exists \psi\in C(\bar\Omega), \psi>0 \textrm{ in } \bar\Omega \textrm{ such that %}  D\mathcal{P}[\psi]+[M-bu_d]\psi+\mu\psi\leq 0 \right\}\nonumber
%\end{eqnarray}
%and
%\begin{eqnarray}\label{principaleigenvalue-(0,1)}
%&&  \nu_{(0, v_D)} \\
%&=&\sup \left\{  \nu\in \mathbb R \ \big |\ \exists \phi\in C(\bar\Omega), \phi>0 \textrm{ in } \bar\Omega \textrm{ such that %}  d \mathcal{K}[\phi] +[m -cv_D]\phi+\nu\phi\leq 0 \right\}.\nonumber
%\end{eqnarray}
Denote
\begin{eqnarray}\label{PEV}
&&  \mu_{(u_d,0)}=\sup \left\{\textrm{Re}\, \lambda\, |\, \lambda\in \sigma(D\mathcal{P}+[M(x)-bu_d])   \right\} \\
&&  \nu_{(0, v_D)}= \sup \left\{\textrm{Re}\, \lambda\, |\, \lambda\in \sigma( d \mathcal{K}  +[m (x)-cv_D])   \right\}.\nonumber
\end{eqnarray}
It is known that the signs of $\mu_{(u_d,0)}$ and $\nu_{(0, v_D)}$ determine the local stability/instability of
$(u_d,0)$ and  $(0, v_D)$ respectively.  This is explicitly stated as follows and the proof is omitted since it is standard.
\begin{lem}\label{lm-signs}
Assume that the assumptions \textbf{(C1)}, \textbf{(C2)} hold. Then
\begin{itemize}
\item[(i)] $(u_d,0)$ is locally unstable if $\mu_{(u_d,0)}>0$; $(u_d,0)$ is locally stable if $\mu_{(u_d,0)}<0$; $(u_d,0)$ is neutrally stable if $\mu_{(u_d,0)}=0$.
\item[(ii)] $(0, v_D)$ is locally unstable if $\nu_{(0, v_D)}>0$; $(0, v_D)$ is locally stable if $\nu_{(0, v_D)}<0$; $(0, v_D)$ is neutrally stable if $\nu_{(0, v_D)}=0$.
\end{itemize}
\end{lem}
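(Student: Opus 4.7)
The plan is to exploit the block-triangular structure of the linearization and reduce each stability question to a scalar spectral inequality. I sketch the argument for $(u_d,0)$; the case of $(0,v_D)$ is entirely analogous.

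First, I would observe that in (\ref{lin-d}) the second component of $\mathcal{L}_{(u_d,0)}$ does not depend on $\phi$, so the operator is lower block-triangular on $C(\bar\Omega)\times C(\bar\Omega)$ with diagonal blocks $A_1:=d\mathcal{K}+[m(x)-2u_d]$ and $A_2:=D\mathcal{P}+[M(x)-bu_d]$ and off-diagonal block given by multiplication by the bounded continuous function $-cu_d$. A routine resolvent computation (back-substitution in the block system $(\mathcal{L}_{(u_d,0)}-\lambda)(\phi,\psi)^{\top}=(f,g)^{\top}$) gives $\sigma(\mathcal{L}_{(u_d,0)})=\sigma(A_1)\cup\sigma(A_2)$. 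Writing $s(\cdot):=\sup\{\mathrm{Re}\,\lambda:\lambda\in\sigma(\cdot)\}$ for the spectral bound, this means $s(\mathcal{L}_{(u_d,0)})=\max\{s(A_1),\,\mu_{(u_d,0)}\}$.

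Second, I would verify $s(A_1)<0$. Since $u_d>0$ solves $d\mathcal{K}[u_d]+(m-u_d)u_d=0$, the operator $L:=d\mathcal{K}+[m-u_d]$ admits the strictly positive eigenfunction $u_d$ with eigenvalue $0$; by the nonlocal Krein--Rutman theory for kernels satisfying \textbf{(C2)} (as collected in Appendix A), $0$ is then the principal eigenvalue of $L$, so $s(L)=0$. Letting $\phi^{*}>0$ be a positive adjoint eigenfunction of $L$ at eigenvalue $0$ and $\phi>0$ a principal eigenfunction of $A_1$ at eigenvalue $\lambda=s(A_1)$, the identity $L\phi=(\lambda+u_d)\phi$ paired against $\phi^{*}$ gives $0=\lambda\int\phi\phi^{*}\,dx+\int u_d\phi\phi^{*}\,dx$, and strict positivity of $u_d,\phi,\phi^{*}$ forces $\lambda<0$. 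Consequently $s(\mathcal{L}_{(u_d,0)})$ has the same sign as $\mu_{(u_d,0)}$ whenever $\mu_{(u_d,0)}\ge 0$ and remains strictly negative when $\mu_{(u_d,0)}<0$.

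Finally, because $\mathcal{L}_{(u_d,0)}$ is a bounded linear operator on $C(\bar\Omega)\times C(\bar\Omega)$, the spectral mapping theorem for exponentials of bounded operators identifies the growth bound of the generated uniformly continuous semigroup with $s(\mathcal{L}_{(u_d,0)})$. Thus $s<0$ gives local exponential stability, $s>0$ produces an exponentially growing linearized mode and hence instability, and $s=0$ is the neutrally stable borderline; this is the standard principle of linearized stability. I expect the main obstacle to be the Krein--Rutman step that identifies $s(L)=0$ through the positive eigenfunction $u_d$, since principal eigenvalues need not exist for arbitrary nonlocal operators; this is precisely what the structural hypotheses \textbf{(C1)}--\textbf{(C2)} and the results of Appendix A supply. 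Everything else is bounded-operator spectral calculus, which is why the authors describe the proof as standard.
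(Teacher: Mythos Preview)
The paper omits the proof as ``standard,'' and your block-triangular reduction is indeed the natural route: $\sigma(\mathcal{L}_{(u_d,0)})=\sigma(A_1)\cup\sigma(A_2)$, so once $s(A_1)<0$ the sign of the full spectral bound is dictated by $\mu_{(u_d,0)}=s(A_2)$, and the bounded-operator linearized stability principle finishes. (Minor slip: in (\ref{lin-d}) the second row is independent of $\phi$, so the block matrix is \emph{upper}, not lower, triangular.)

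Your argument for $s(A_1)<0$, however, has a real gap---and not where you locate it. The step $s(L)=0$ is unproblematic: $u_d>0$ \emph{is} a positive eigenfunction of $L=d\mathcal{K}+[m-u_d]$ at eigenvalue $0$, and the characterization (\ref{principaleigenvalue}) then pins down $s(L)=0$ directly. The trouble is your next move, which posits a positive principal eigenfunction $\phi$ for $A_1=d\mathcal{K}+[m-2u_d]$ at $s(A_1)$ and a positive adjoint eigenfunction $\phi^{*}$ for $L$. Neither existence is supplied by Appendix~A (which only gives $s(\cdot)=\lambda^{*}(\cdot)$, not a Krein--Rutman theorem), and the remark immediately after this lemma warns explicitly that such quantities ``might not be principal eigenvalues'' for nonlocal operators; the same caveat applies to $A_1$ and to $L^{*}$ when the kernel is not symmetric. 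The repair is short and avoids eigenfunctions altogether: from $Lu_d=0$ one computes $A_1u_d=-u_d^{2}\le -(\min_{\bar\Omega}u_d)\,u_d$, so $u_d$ itself is admissible in the definition (\ref{principaleigenvalue}) applied to $A_1$, giving $s(A_1)=\lambda^{*}(A_1)\le -\min_{\bar\Omega}u_d<0$.
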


Remark that as explained in the introduction, due to lack of regularity, in general $\mu_{(u_d,0)}$ and $\nu_{(0, v_D)}$ might not be principal eigenvalues of the corresponding linearized operators. Thus the characterization of $\mu_{(u_d,0)}$ and $\nu_{(0, v_D)}$ need be justified carefully. See \cite{CovilleLiWang} and its references for more discussions. When nonlocal operators are {\it symmetric}, the variational characterization is useful, i.e. for example
$$
\mu_{(u_d,0)}= \sup_{0 \neq \psi\in L^2} \frac{\int_{\Omega} \left( D\psi \mathcal{P}[\psi]+[M(x)-bu_d]\psi^2 \right) dx }{\int_{\Omega} \psi^2 dx}.
$$
See Appendix A for the case that  nonlocal operators are {\it nonsymmetric}.

The following result explains how to characterize the global dynamics of the competition model (\ref{original}) with two semi-trivial steady states.

\begin{thm}\label{thm-monotone}
Assume that the assumptions \textbf{(C1)}, \textbf{(C2)} hold. Also assume that  system (\ref{original}) admits two semi-trivial steady states $(u_d, 0)$ and $(0,v_D)$. We have the following three possibilities:
\begin{itemize}
\item[(i)]   If both $\mu_{(u_d,0)}$ and $\nu_{(0, v_D)}$,  defined in (\ref{PEV}), are positive,  the system (\ref{original}) at least has one positive steady state in $C(\bar\Omega)\times C(\bar\Omega)$. If in addition, assume that the system (\ref{original}) has a unique positive steady state, denoted by $(u_d, v_D)$, then $(u_d, v_D)$ is globally asymptotically stable.
\item[(ii)]   If $\mu_{(u_d,0)}$ defined in (\ref{PEV}) is positive and no positive steady states of system (\ref{original}) exist, then the semi-trivial steady state $(0,v_D)$ is globally asymptotically stable.
\item[(iii)]   If $\nu_{(0, v_D)}$ defined in (\ref{PEV}) is positive and system (\ref{original}) does not admit positive steady states, then the semi-trivial steady state $(u_d,0)$ is globally asymptotically stable.
\end{itemize}
\end{thm}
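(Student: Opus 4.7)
The plan is to treat (\ref{original}) as an order-preserving semiflow on $C(\bar\Omega)\times C(\bar\Omega)$ with the competitive order $(u_1,v_1)\leq_K(u_2,v_2)\iff u_1\leq u_2,\ v_1\geq v_2$, and to run monotone-convergence arguments inside the forward-invariant order interval $[(0,v_D),(u_d,0)]_K=\{(u,v):0\leq u\leq u_d,\ 0\leq v\leq v_D\}$. Monotonicity follows from a linear nonlocal maximum principle: if $w=u_2-u_1\geq 0$ and $z=v_1-v_2\geq 0$ initially, then
\[
w_t=d\mathcal{K}[w]+[m-u_1-u_2-cv_1]\,w+cu_2\,z,
\]
with an analogous inequality for $z$ driven by $\mathcal P$; nonnegative cross-couplings together with $k(x,x),p(x,x)>0$ from \textbf{(C2)} preserve order (strictly for $t>0$ when the initial data is nontrivially ordered), and the logistic structure plus comparison against the semi-trivial equilibria keeps the interval above forward-invariant.

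\textbf{Push-off from unstable semi-trivial states.} If $\mu_{(u_d,0)}>0$, I would construct for each small $\varepsilon>0$ a strict $\leq_K$-sub-solution $(u_d-\varepsilon\phi,\varepsilon\psi)$ lying just below $(u_d,0)$, whose orbit $\gamma^{(1)}_\varepsilon$ is strictly $\leq_K$-decreasing. Here $\psi>0$ is a near-eigenfunction of $D\mathcal P+(M-bu_d)$ with eigenvalue close to $\mu_{(u_d,0)}$, supplied by the approximate-principal-eigenfunction machinery alluded to after Lemma \ref{lm-signs} (in the symmetric case \textbf{(C3)} it is immediate from the variational formula), and $\phi>0$ solves the linear inhomogeneous nonlocal equation $d\mathcal K[\phi]+(m-2u_d)\phi=-cu_d\psi$, solvable because the principal eigenvalue of $d\mathcal K+(m-2u_d)$ is strictly negative (the steady state $u_d$ is itself a strict super-solution). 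A symmetric construction under $\nu_{(0,v_D)}>0$ gives $(\varepsilon\phi',v_D-\varepsilon\psi')\geq_K(0,v_D)$ with strictly $\leq_K$-increasing orbit $\gamma^{(2)}_\varepsilon$. Bounded monotone convergence plus dominated convergence on the equation upgrade these orbits to $C(\bar\Omega)$-convergence at steady states.

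\textbf{The three cases.} In case (i), the limits $L_1=\lim\gamma^{(1)}_\varepsilon$ and $L_2=\lim\gamma^{(2)}_\varepsilon$ are positive equilibria --- semi-trivial limits are ruled out because the approximate eigenfunction produces, via $\int v(\cdot,t)\psi\,dx$ and its analogue, an unbounded exponentially growing mode incompatible with convergence to $(u_d,0)$ or $(0,v_D)$ --- which gives existence. Under uniqueness $L_1=L_2=:(u^*,v^*)$; any positive $(u_0,v_0)$, after a brief evolution to guarantee strict inequalities on $\bar\Omega$, is sandwiched as $\gamma^{(2)}_\varepsilon(0)\leq_K(u_0,v_0)\leq_K\gamma^{(1)}_\varepsilon(0)$ for sufficiently small $\varepsilon$, and order-preservation then forces $(u(t),v(t))\to(u^*,v^*)$. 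In case (ii), the absence of a positive equilibrium leaves $(0,v_D)$ as the only possible limit of $\gamma^{(1)}_\varepsilon$; for any $(u_0,v_0)$ in the invariant interval with $v_0$ not identically zero (reducible to $v_0>0$ on $\bar\Omega$ by short-time strong monotonicity), taking $\varepsilon$ small yields $\gamma^{(1)}_\varepsilon(0)\geq_K(u_0,v_0)$, so $(u(t),v(t))\leq_K\gamma^{(1)}_\varepsilon(t)\to(0,v_D)$ in $C(\bar\Omega)$, and invariance ($v\leq v_D$) completes $(u,v)\to(0,v_D)$. Case (iii) is symmetric.

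\textbf{Main obstacle.} The chief technical difficulty is the second step: because $\mathcal K,\mathcal P$ lack smoothing, $\mu_{(u_d,0)}$ and $\nu_{(0,v_D)}$ need not be attained as principal eigenvalues, so the near-eigenfunction $\psi$ has to be chosen with a uniform positive lower bound on $\bar\Omega$, and the companion push-off direction $\phi$ arises from an inhomogeneous linear nonlocal equation whose solvability depends on spectral information about a shifted nonlocal operator. A secondary but non-trivial point is lifting pointwise monotone convergence of the orbits to $C(\bar\Omega)$-convergence at a steady state, which is handled by dominated convergence applied directly to $u_t=d\mathcal K[u]+u(m-u-cv)$ together with the uniform $L^\infty$ bounds from Step~1.
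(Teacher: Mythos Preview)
Your proposal is correct and aligns with the paper's approach. The paper actually omits the proof, noting only that ``its proof is based on upper/lower solution method and we omit the details here since it is almost the same as that of \cite[Theorem 2.1]{BaiLi2015},'' and the remark immediately following the theorem confirms that the sub/super-solution push-off from the semi-trivial states is built using ``principal eigenfunctions of suitably perturbed eigenvalue problems which admit principal eigenvalues'' --- exactly the approximate-eigenfunction construction you outline, with the same acknowledged obstacle (non-existence of principal eigenvalues for $\mathcal K$, $\mathcal P$) and the same resolution.
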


This result is a natural extension of the competition model with
random diffusion. Its proof is based on upper/lower solution method and we omit the details here since it is almost the same as that of \cite[Theorem 2.1]{BaiLi2015}, where a simplified nonlocal operator is considered.

\begin{remark}
It is routine to verify that Theorem \ref{thm-monotone} also holds for systems (\ref{general-bc}) and (\ref{general-mix}). Indeed, one sees from the proof of Theorem \ref{thm-monotone} that for models with only nonlocal dispersals, $\mu_{(u_d,0)}$ and $\nu_{(0, v_D)}$ might not be principal eigenvalues, thus the constructions of upper/lower solutions rely on the principal eigenfunctions of suitably perturbed eigenvalue problems which admit principal eigenvalues. However, when local diffusion is incorporated, the existence of principal eigenvalues is always guaranteed, which simply makes the arguments easier.
\end{remark}

\section{Proof of Theorem \ref{thm-main}}
To better demonstrate the proof of Theorem \ref{thm-main}, some properties of local stability and positive steady states of (\ref{original}) will be analyzed first.

The following result is about the classification of local stability.

\begin{prop}\label{prop-localstability}
Assume that \textbf{(C1)}, \textbf{(C2)}, \textbf{(C3)} hold and $0<bc\leq 1$. Then there exist exactly four alternatives as follows.
\begin{itemize}
\item[(i)] $\mu_{(u_d,0)}>0$,  $\nu_{(0, v_D)}>0$;
\item[(ii)] $\mu_{(u_d,0)}>0$,  $\nu_{(0, v_D)}\leq 0$;
\item[(iii)] $\mu_{(u_d,0)}\leq 0$,  $\nu_{(0, v_D)}>0$;
\item[(iv)] $\mu_{(u_d,0)}= \nu_{(0, v_D)}=0$.
\end{itemize}
Moreover,  $(iv)$  holds if and only if $bc=1$ and $bu_d= v_D$.
\end{prop}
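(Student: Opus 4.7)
The plan is to exploit the symmetric variational characterization of $\mu_{(u_d,0)}$ and $\nu_{(0,v_D)}$ recalled just before the statement. The four listed alternatives are pairwise disjoint by inspection of the signs, so exhaustiveness together with the ``if and only if'' claim for case~(iv) reduces to two assertions: (a) if both $\mu_{(u_d,0)} \leq 0$ and $\nu_{(0,v_D)} \leq 0$, then $bc = 1$ and $bu_d = v_D$; and (b) conversely, if $bc = 1$ and $bu_d = v_D$, then $\mu_{(u_d,0)} = \nu_{(0,v_D)} = 0$.

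For (a), I would feed the obvious candidates into the Rayleigh quotients: $\psi = v_D$ into that for $\mu_{(u_d,0)}$ and $\phi = u_d$ into that for $\nu_{(0,v_D)}$. Using the single-species equations $D\mathcal{P}[v_D] + v_D(M - v_D) = 0$ and $d\mathcal{K}[u_d] + u_d(m - u_d) = 0$ to eliminate the dispersal contributions, the two sign hypotheses respectively become
\begin{equation*}
\int_\Omega v_D^3\, dx \,\leq\, b\int_\Omega u_d v_D^2\, dx, \qquad \int_\Omega u_d^3\, dx \,\leq\, c\int_\Omega u_d^2 v_D\, dx.
\end{equation*}
Multiplying these and bounding the right-hand side by H\"older's inequality with exponents $(3, 3/2)$ gives
\begin{equation*}
\int u_d^3 \int v_D^3 \,\leq\, bc\int u_d v_D^2 \int u_d^2 v_D \,\leq\, bc \int u_d^3 \int v_D^3,
\end{equation*}
forcing $bc \geq 1$ and therefore $bc = 1$ by hypothesis. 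Since the chain is now tight, the equality case of H\"older pins down $u_d = \lambda v_D$ pointwise for some $\lambda > 0$, and equality in the first test inequality then reads $\int v_D^3 = b\lambda \int v_D^3$, whence $b\lambda = 1$, i.e.\ $bu_d = v_D$.

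For (b), under $bu_d = v_D$ (which also yields $cv_D = u_d$ via $bc = 1$), the Rayleigh quotient for $\mu_{(u_d,0)}$ is that of $D\mathcal{P} + (M - v_D)$, an operator already annihilating $v_D$. The substitution $\psi = v_D w$ together with another application of the $v_D$-equation rewrites the numerator as $D\int_\Omega v_D w(\mathcal{P}[v_D w] - w\mathcal{P}[v_D])\, dx$, and in the difference $\mathcal{P}[v_D w] - w\mathcal{P}[v_D]$ the $a_D(x)$ terms cancel (so types \textbf{(N)} and \textbf{(D)} are treated uniformly), leaving $\int p(x,y) v_D(y)(w(y) - w(x))\, dy$. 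Symmetrizing in $(x,y)$ via $p(x,y) = p(y,x)$ yields
\begin{equation*}
R(v_D w) \int v_D^2 w^2\, dx \,=\, -\frac{D}{2}\int_\Omega\int_\Omega p(x,y) v_D(x) v_D(y) (w(x) - w(y))^2\, dx\, dy \,\leq\, 0,
\end{equation*}
so $\mu_{(u_d,0)} \leq 0$; the choice $w \equiv 1$ (i.e.\ $\psi = v_D$) evidently gives $R = 0$, hence $\mu_{(u_d,0)} = 0$. The same computation with $\phi = u_d \tilde{w}$ and $cv_D = u_d$ yields $\nu_{(0,v_D)} = 0$.

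The main obstacle will be the bookkeeping in step (b): verifying the cancellation of the $a_D$ terms in the difference $\mathcal{P}[v_D w] - w\mathcal{P}[v_D]$ so that the symmetrization delivers a manifestly nonpositive square for both boundary types. Once this self-adjoint square is in hand, combining it with the H\"older computation of (a) yields both exhaustiveness of the list (i)--(iv) and the characterization of (iv).
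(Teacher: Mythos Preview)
Your proof is correct, and part~(b) actually supplies more detail than the paper, which dismisses the converse with ``it is easy to check''; your $\psi=v_Dw$ substitution and symmetrization is the right way to make this explicit, and the observation that the $a_D$ terms drop out of $\mathcal{P}[v_Dw]-w\mathcal{P}[v_D]$ so that types \textbf{(N)} and \textbf{(D)} are handled simultaneously is a nice touch.

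The one genuine difference from the paper is how you combine the two integral inequalities in step~(a). From the same starting point
\[
\int_\Omega (v_D^3-bu_dv_D^2)\,dx\le 0,\qquad \int_\Omega(u_d^3-cv_Du_d^2)\,dx\le 0,
\]
the paper multiplies the second by $b^3$, uses $bc\le 1$ to replace $b^3c$ by $b^2$, and adds the result to the first to obtain the pointwise-signed identity
\[
\int_\Omega (bu_d-v_D)^2(bu_d+v_D)\,dx\le 0,
\]
from which $bu_d=v_D$ (and hence $bc=1$ and equality throughout) is immediate. Your route---multiplying the two inequalities and sandwiching the product with H\"older---reaches the same conclusion but via a slightly longer chain: you first force $bc=1$, then extract proportionality $u_d=\lambda v_D$ from the H\"older equality case, then back-substitute to pin down $\lambda=1/b$. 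Both arguments are elementary; the paper's is a touch more direct because the factorization $(bu_d-v_D)^2(bu_d+v_D)$ gives the pointwise identity $bu_d=v_D$ in one stroke without passing through a proportionality constant, while yours trades that algebraic trick for a standard H\"older step. Neither approach has any real advantage in generality here.
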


\begin{proof}
Only need show that when $\mu_{(u_d,0)}\leq 0$,  $\nu_{(0, v_D)}\leq 0$, we have $\mu_{(u_d,0)}= \nu_{(0, v_D)}=0$, $b=c=1$ and $u_d= v_D$.

Note that
$$
\mu_{(u_d,0)}= \sup_{0 \neq \psi\in L^2} \frac{\int_{\Omega} \left( D\psi \mathcal{P}[\psi]+[M(x)-bu_d]\psi^2 \right) dx }{\int_{\Omega} \psi^2 dx}\leq 0.
$$
Thus one sees that
$$
\int_{\Omega} \left( Dv_D \mathcal{P}[v_D]+[M(x)-bu_d]v_D^2 \right) dx\leq 0,
$$
and thus due to (\ref{singleD}) it follows that
\begin{equation*}
\int_{\Omega} \left( -[M(x)- v_D]v_D^2+[M(x)-bu_d]v_D^2 \right) dx\leq 0,
\end{equation*}
i.e.,
\begin{equation}\label{pf-localstability1}
\int_{\Omega} \left( v_D^3  -bu_dv_D^2 \right) dx\leq 0.
\end{equation}

Similarly, $\nu_{(0, v_D)}\leq 0$ and (\ref{singled}) give that
$$
\int_{\Omega} \left( du_d \mathcal{K}[u_d]+[m(x)-cv_D]u_d^2 \right) dx\leq 0
$$
and
\begin{equation}\label{pf-localstability2}
\int_{\Omega} \left( u_d^3  -cv_Du_d^2 \right) dx\leq 0.
\end{equation}

Now by multiplying (\ref{pf-localstability2}) by $b^3$ and using the condition $0<bc\leq 1$, we have
$$
\int_{\Omega} \left( (bu_d)^3  - v_D(bu_d)^2 \right) dx\leq \int_{\Omega} \left( (bu_d)^3  - bcv_D(bu_d)^2 \right) dx\leq 0,
$$
which, together with (\ref{pf-localstability1}), implies that
\begin{equation}\label{pf-localstability-key}
\int_{\Omega}(bu_d-v_D)^2(bu_d+v_D)dx\leq 0.
\end{equation}
Therefore, all previous inequalities should be equalities. Hence it is obvious that $\mu_{(u_d,0)}= \nu_{(0, v_D)}=0$, $bc=1$ and $bu_d=v_D$. On the other hand, if $bc=1$ and $bu_d=v_D$, then it is easy to check that $\mu_{(u_d,0)}= \nu_{(0, v_D)}=0$.
\end{proof}

Next, we give the description of number of positive steady states of system (\ref{original}). Simply speaking, this number could be zero, one or infinity. As explained in the introduction after \textbf{Conjecture}, there is essential difference between models with  local and nonlocal dispersals.  Therefore, to study global dynamics of competition systems with nonlocal dispersals,  different from approaches known for local models, numbers of   positive steady states need be classified separately. To be more specific, our arguments rely on exploring characteristics of nonlocal operators, as well as some integral relations inspired by \cite{HeNi}.

\begin{prop}\label{prop-steadystates}
Assume that \textbf{(C1)}, \textbf{(C2)}, \textbf{(C3)} hold and $0<bc\leq 1$. Then one of the following situations must be true.
\begin{itemize}
\item[(i)] (\ref{original}) has no positive steady states;
\item[(ii)] (\ref{original}) admits exactly one positive steady state;
\item[(iii)] (\ref{original}) has infinitely many positive steady states.
\end{itemize}
Moreover,  $(iii)$  happens if and only if $bc=1$, $b u_d= v_D$, and then all the positive steady states of (\ref{original}) consist of $(su_d, (1-s)v_D)$, $0<s<1$.
\end{prop}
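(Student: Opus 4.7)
The plan is to mirror the proof of Proposition~\ref{prop-localstability}, replacing the variational characterisations of $\mu$ and $\nu$ by Picone-type identities for an arbitrary positive steady state $(U,V)$. First, testing the $U$-equation against $U$ and the $u_d$-equation against $U^2/u_d$, integrating, and subtracting, the symmetry of $k$ yields
\[
\int_\Omega U^2(U+cV-u_d)\,dx = -\frac{d}{2}\int_\Omega\!\!\int_\Omega k(x,y) u_d(x) u_d(y)\Bigl(\tfrac{U(x)}{u_d(x)} - \tfrac{U(y)}{u_d(y)}\Bigr)^2\,dy\,dx \le 0,
\]
with equality if and only if $U/u_d$ is essentially constant, and an analogue for $V,v_D$. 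Testing the $U$-equation against $u_d$ and the $u_d$-equation against $U$ (and likewise for $V,v_D$), the symmetry of the kernels cancels the nonlocal terms and produces the \emph{exact} integral identities $\int u_d U(u_d-U-cV)\,dx = 0$ and $\int v_D V(v_D-V-bU)\,dx = 0$. A third round of tests, using $u_d^2/U$ against the $U$-equation and $u_d$ against the $u_d$-equation (analogously for $V$), yields the \emph{dual} Picone inequalities $\int u_d^2(u_d-U-cV)\,dx \le 0$ and $\int v_D^2(v_D-V-bU)\,dx \le 0$.

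With these three families of integral relations in hand, I multiply the dual Picone inequality for $u_d$ by $b^3$ and apply $b^3 c \le b^2$, exactly as in the proof of Proposition~\ref{prop-localstability}, to arrive at
\[
\int_\Omega (bu_d)^2\bigl(bu_d - bU - V\bigr)\,dx \le 0,
\]
while the dual inequality for $v_D$ gives $\int_\Omega v_D^2(v_D - bU - V)\,dx \le 0$. Summing these and invoking the algebraic identity $(a+b)(a^2+b^2) - 2(a^3+b^3) = -(a-b)^2(a+b)$ with $a=bu_d$, $b=v_D$, while simultaneously subtracting appropriate multiples of the exact identities to absorb the $bU+V$ cross-terms, the goal is to reach an inequality of the form
\[
\int_\Omega (bu_d - v_D)^2 (\text{positive weight})\,dx \le 0,
\]
which forces $bu_d \equiv v_D$. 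Tracing back equality through the chain pins down $bc=1$, promotes the Picone inequalities to equalities, and hence (by Picone rigidity under the connectedness provided by \textbf{(C2)}) forces $U/u_d \equiv s$ and $V/v_D \equiv 1-s$ for some constant $s\in(0,1)$.

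Conversely, when $bc=1$ and $bu_d=v_D$, direct substitution verifies that every $(su_d,(1-s)v_D)$ with $s\in(0,1)$ is a positive steady state, giving the continuum in case (iii); outside this critical regime, the rigidity just sketched precludes the coexistence of two distinct positive steady states, so only case (i) or case (ii) can occur. The main technical obstacle I expect is the algebraic manipulation in the middle step: unlike Proposition~\ref{prop-localstability}, where only the two fixed functions $u_d, v_D$ appear, here the cross-term $bU+V$ couples the two equations, and a careful book-keeping of weighted integrals, combining the dual Picone inequalities with the exact identities, will be needed to isolate the pure square $(bu_d-v_D)^2$ with a manifestly positive weight.
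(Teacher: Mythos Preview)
Your Picone identities and the exact integral relations are all correct, but the strategy has a structural gap that cannot be repaired by book-keeping. Every inequality you derive involves a \emph{single} positive steady state $(U,V)$ and the fixed functions $u_d,v_D$; nothing in your scheme ever compares two distinct positive steady states with each other. Consequently, if your ``middle step'' could actually be closed to force $bu_d\equiv v_D$ and $bc=1$, it would force these conclusions whenever \emph{any} positive steady state exists---in direct contradiction with case~(ii) of the proposition, where a unique positive steady state exists without the constraint $bu_d=v_D$. Concretely, writing $a=bu_d$, $e=v_D$, $f=bU+V$, your dual Picone inequalities read $\int a^2(a-f)\le 0$ and $\int e^2(e-f)\le 0$; these hold trivially whenever $f$ dominates $a$ and $e$, and no linear combination with the exact identities can manufacture a sign-definite $(a-e)^2$ term, because the system of inequalities simply does not determine $a-e$.

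The paper proceeds quite differently: it supposes there are two positive steady states $(u,v)$ and $(u^*,v^*)$, ordered so that $w:=u-u^*>0$ and $z:=v-v^*<0$, and compares them \emph{directly} (the semi-trivial states $u_d,v_D$ play no role until the very end). A symmetrisation exploiting the sign $u>u^*$ gives $\int_\Omega (w+cz)w^2\,dx\le 0$, and the sign $v<v^*$ gives $\int_\Omega (bw+z)z^2\,dx\ge 0$. Multiplying the second inequality by $c^3$ and subtracting yields
\[
0\le \int_\Omega (w+cz)^2(cz-w)\,dx,
\]
and now the crucial point is that $cz-w<0$ pointwise (because $w>0$, $z<0$), so the integrand is nonpositive and one is forced to $w+cz\equiv 0$, hence $bc=1$. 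The definite signs of $w$ and $z$ are what make the final square extractable with a manifestly signed weight; your setup, comparing $(U,V)$ with $(u_d,0)$ and $(0,v_D)$, has no such sign control on $u_d-U-cV$ or $v_D-V-bU$, and this is why the algebra cannot close.
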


\begin{proof}
Suppose that (\ref{original}) admits two different positive steady states $(u,v)$ and $(u^*,v^*)$, w.l.o.g., $u>u^*$, $v<v^*$.
We will show that (iii) happens.

First, set $w= u- u^*>0$ and $z=v-v^*<0$ and it is standard to check that
\begin{equation}\label{pf-equation-w-z}
\begin{cases}
d\mathcal{K}[w] +(m-u-cv)w-u^*w-cu^*z=0,\\
D\mathcal{P}[z] + (M-bu-v) z- bv^*w -v^* z=0.
\end{cases}
\end{equation}
Using the equation satisfied by $u$, one has
$$
d\left(u \mathcal{K}[w] - w\mathcal{K}[u]  \right) =u u^* (w+cz).
$$
This yields that
\begin{equation}\label{pf-nonpositive}
d\int_{\Omega}\left(-u \mathcal{K}[u^*] + u^* \mathcal{K}[u]  \right) \frac{w^2}{u u^* } dx =  \int_{\Omega}(w+cz)w^2 dx.
\end{equation}
We claim that $\int_{\Omega}(w+cz)w^2 dx\leq 0$.

To prove this claim, let us calculate the left hand side of (\ref{pf-nonpositive}). Note that assumption \textbf{(C3)}, i.e. $k(x,y)$ is symmetric, is important in the following computations.
\begin{eqnarray}\label{pf-exchange-xy-1}
&& d\int_{\Omega}\left(-u \mathcal{K}[u^*] + u^* \mathcal{K}[u]  \right) \frac{w^2}{u u^* } dx \cr
&=& d\int_{\Omega}\int_{\Omega} k(x,y)\left[ u^*(x)u(y)-u(x)u^*(y) \right] \frac{(u(x)-u^*(x))^2}{u(x) u^*(x) } dy dx\cr
&=&  d\int_{\Omega}\int_{\Omega} k(x,y)\left[ u^*(x)u(y)-u(x)u^*(y) \right]\left( \frac{u(x)}{u^*(x) }+\frac{u^*(x)}{u(x) } \right) dy dx,
\end{eqnarray}
where $\int_{\Omega}\int_{\Omega} k(x,y)\left[ u^*(x)u(y)-u(x)u^*(y) \right] dydx =0$ is used. By exchanging $x$ and $y$, we have
\begin{eqnarray}\label{pf-exchange-xy-2}
&& d\int_{\Omega}\left(-u \mathcal{K}[u^*] + u^* \mathcal{K}[u]  \right) \frac{w^2}{u u^* } dx \cr
&=&  d\int_{\Omega}\int_{\Omega} k(y,x)\left[ u^*(y)u(x)-u(y)u^*(x) \right]\left( \frac{u(y)}{u^*(y) }+\frac{u^*(y)}{u(y) } \right) dy dx.
\end{eqnarray}
Due to (\ref{pf-exchange-xy-1}) and (\ref{pf-exchange-xy-2}), one sees that
\begin{eqnarray*}
&& d\int_{\Omega}\left(-u \mathcal{K}[u^*] + u^* \mathcal{K}[u]  \right) \frac{w^2}{u u^* } dx \cr
&=&  {d\over 2}\int_{\Omega}\int_{\Omega} k(x,y)\left[ u^*(x)u(y)-u(x)u^*(y) \right]\left( \frac{u(x)}{u^*(x) }+\frac{u^*(x)}{u(x) } -\frac{u(y)}{u^*(y) }-\frac{u^*(y)}{u(y) } \right) dy dx\\
&=&  {d\over 2}\int_{\Omega}\int_{\Omega} k(x,y)\left[ u^*(x)u(y)-u(x)u^*(y) \right]^2\left(  \frac{1}{u(x) u(y) } -\frac{1}{u^*(x)u^*(y) }  \right) dy dx\\
&\leq & 0
\end{eqnarray*}
since $u>u^*$. The claim is proved, i.e., $\int_{\Omega}(w+cz)w^2 dx\leq 0$.

Similarly, using (\ref{pf-equation-w-z}) and the equation satisfied by $v$, we have
$$
D\left(v \mathcal{P}[z] - z\mathcal{P}[v]  \right) =v v^* (bw+z),
$$
which gives that
$$
D\int_{\Omega}\left(-v \mathcal{P}[v^*] + v^*\mathcal{P}[v]  \right) \frac{z^2}{v v^* } dx =  \int_{\Omega}(bw+z)z^2 dx.
$$
Similar to the proof of the previous claim, we obtain
\begin{eqnarray}\label{pf-key}
&& \int_{\Omega}(bw+z)z^2 dx\cr
&=&D\int_{\Omega}\left(-v \mathcal{P}[v^*] + v^*\mathcal{P}[v]  \right) \frac{z^2}{v v^* } dx\cr
&=&  {D\over 2}\int_{\Omega}\int_{\Omega} p(x,y)\left[ v^*(x)v(y)-v(x)v^*(y) \right]^2\left(  \frac{1}{v(x) v(y) } -\frac{1}{v^*(x)v^*(y) }  \right) dy dx\cr
&\geq & 0
\end{eqnarray}
since $v<v^*$.

Now we have derived two important inequalities:
\begin{equation}\label{important}
\int_{\Omega}(w+cz)w^2 dx\leq 0,\ \ \ \int_{\Omega}(bw+z)z^2 dx\geq 0.
\end{equation}
Multiplying the second one by $c^3$ and subtracting the first one, it follows that
\begin{eqnarray}\label{pf-keyrelation}
0 &\leq& \int_{\Omega}(cbw+cz)(cz)^2 dx -\int_{\Omega}(w+cz)w^2 dx\cr
&\leq &  \int_{\Omega}(w+cz)(cz)^2 dx -\int_{\Omega}(w+cz)w^2 dx\cr
&=& \int_{\Omega}(w+cz)^2(cz-w) dx,
\end{eqnarray}
where $bc\leq 1$ is used in the second inequality.
The assumption $w= u- u^*>0$ and $z=v-v^*<0$ indicates that $w+cz =0$ in $\bar\Omega$ and all the previous inequalities should be equalities. Hence we also have $bc=1$ and $bw+z =0$ (i.e., $w+cz =0$) in $\bar\Omega$.

Moreover, note that $w+cz =0$ is equivalent to $u+cv =u^* +cv^*$ and recall the equation satisfied by $u$, $u^*$, it is standard to show that $u^*= \alpha u$, where $0<\alpha<1$. Similarly, it can be verified that $v^*= \beta v$, where $\beta>1$. Then using $u+cv =u^* +cv^*$ again, we have
$$
u=c{\beta-1\over 1-\alpha} v.
$$
Recall that $(u,v)$ satisfies
\begin{equation*}
\begin{cases}
d \mathcal{K}[u]  +u(m(x)-u- cv)=0, \\
D \mathcal{P}[v]  +v(M(x)-bu- v)=0,
\end{cases}
\end{equation*}
which can be written as
\begin{equation*}
\begin{cases}
d \mathcal{K}[c{\beta-1\over 1-\alpha} v]  +c{\beta-1\over 1-\alpha} v(m(x)-c{\beta-1\over 1-\alpha} v- cv)=0, \\
D \mathcal{P}[v]  +v(M(x)-bc{\beta-1\over 1-\alpha} v- v)=0,
\end{cases}
\end{equation*}
which becomes
\begin{equation*}
\begin{cases}
d \mathcal{K}[  v]  +  v(m(x)-c{\beta-\alpha\over 1-\alpha} v )=0, \\
D \mathcal{P}[v]  +v(M(x)- {\beta-\alpha\over 1-\alpha} v )=0.
\end{cases}
\end{equation*}
This, together with Corollary \ref{Cor-single}, shows that
$$
u_d= c{\beta-\alpha\over 1-\alpha} v,\ \ \ v_D = {\beta-\alpha\over 1-\alpha} v.
$$
Therefore, $bc=1$, $b u_d= v_D$ and all the positive steady states of (\ref{original}) consist of $(su_d, (1-s)v_D)$, $0<s<1$.
\end{proof}

Now we complete the proof of Theorem \ref{thm-main}  on the basis of Propositions \ref{prop-localstability} and \ref{prop-steadystates}.

\begin{proof}[Proof of Theorem \ref{thm-main}]
First of all, notice that Propositions \ref{prop-localstability} and  \ref{prop-steadystates} immediately  yield case (iv). Thus for cases (i), (ii), (iii), Propositions \ref{prop-localstability} and \ref{prop-steadystates} indicate that system (\ref{original}) admit {\it either no positive steady states or a unique positive steady state.}

Therefore, for case (i), i.e., $\mu_{(u_d,0)}<0$,  $\nu_{(0, v_D)}<0$, thanks to Theorems \ref{thm-monotone}, one easily sees   that  system (\ref{original}) admits a unique positive steady state, which is globally asymptotically stable.

It remains to prove case (ii), since (iii) can be handled similarly.
According to Theorem \ref{thm-monotone}, to prove that $(0,v_D)$ is globally asymptotically stable,  it suffices to show that (\ref{original}) admits no positive steady states.
    Suppose that (\ref{original}) admits a positive steady state $(u,v)$, i.e., $(u,v)$ satisfies
    \begin{equation*}
     \begin{cases}
    d \mathcal{K}[u]  +u(m(x)-u- cv)=0, \\
    D \mathcal{P}[v]  +v(M(x)-bu- v)=0.
    \end{cases}
    \end{equation*}
    Denote $(u^*, v^*)= (0,v_D)$ and set $w= u- u^*= u>0$, $z=v-v^*<0$. Similar to the computation of (\ref{pf-key}), one has
    \begin{eqnarray*}
 &&  \int_{\Omega}(bu+z)z^2 dx = \int_{\Omega}(bw+z)z^2 dx\cr
&=&  {D\over 2}\int_{\Omega}\int_{\Omega} p(x,y)\left[ v^*(x)v(y)-v(x)v^*(y) \right]^2\left(  \frac{1}{v(x) v(y) } -\frac{1}{v^*(x)v^*(y) }  \right) dy dx \geq  0.
    \end{eqnarray*}
    However,
    \begin{eqnarray*}
    0 &\geq& \nu_{(0, v_D)}= \sup_{0 \neq \phi\in L^2} \frac{\int_{\Omega} \left( d\phi \mathcal{K}[\phi]+[m(x)-cv_D]\phi^2 \right) dx }{\int_{\Omega} \phi^2 dx}\\
    &\geq &  \frac{\int_{\Omega} \left( d u \mathcal{K}[ u ]+[m(x)-cv_D]u^2 \right) dx }{\int_{\Omega} u^2 dx}\\
    &=&\frac{\int_{\Omega} \left( -[m(x)-u-cv]u^2+[m(x)-cv_D]u^2 \right) dx }{\int_{\Omega} u^2 dx}\\
    &=& \frac{\int_{\Omega} (u+cz)u^2   dx }{\int_{\Omega} u^2 dx}.
    \end{eqnarray*}
    Putting together the above two inequalities:
    \begin{equation}\label{important-2}
    \int_{\Omega}(bu+z)z^2 dx\geq 0,\ \ \ \int_{\Omega} (u+cz)u^2   dx \leq 0.
    \end{equation}
    similar to (\ref{pf-keyrelation}), we obtain
    $$
    \int_{\Omega} (u+cz)^2(cz-u)dx\geq 0,
    $$
    where $0<bc\leq 1$ is used. Hence $u+cz =0 $ in $\bar\Omega$ and all the previous inequalities should be equalities. In particular, $bc=1$ and $bu+z =0$. Note that $bu+z =0$ means $bu+v = v_D$. Then based on the equations satisfied by $v$ and $v_D$ respectively, it is routine to show that $v= \alpha v_D$, where $0<\alpha<1$. Thus, $u= c(1-\alpha)v_D$. Then plugging  $v= \alpha v_D$ and $u= c(1-\alpha)v_D$ into the equation satisfied by $u$, we have
    $$
    d c(1-\alpha) \mathcal{K}[v_D]  +c(1-\alpha)v_D (m(x)-c v_D)=0,
    $$
    which indicates that $u_d = c v_D$, i.e., $bu_d= v_D$. This yields a contradiction due to Proposition \ref{prop-localstability}.
\end{proof}

\section{More general competition models}
This section is devoted to the proofs of Theorems \ref{thm-main-locationdependent} and \ref{thm-main-mix}, which are about systems (\ref{general-bc}) and (\ref{general-mix}) respectively. The general approaches are similar to that of Theorem \ref{thm-main}. In fact, the  local stability of semi-trivial steady states and number of positive steady states to  systems (\ref{general-bc}) and (\ref{general-mix}) respectively can be classified similarly as in Propositions \ref{prop-localstability} and \ref{prop-steadystates}. To avoid being redundant, the statements of these results  are omitted and we will simply include all the necessary details in the proofs of
Theorems \ref{thm-main-locationdependent} and \ref{thm-main-mix}.

\begin{proof}[Proof of Theorem \ref{thm-main-locationdependent}]
(i) Assume that both $(u^*_d, 0)$ and $(0,v^*_D)$ are locally unstable, based on Theorem \ref{thm-monotone}  and the remark after it, existence of positive steady states follows immediately and it suffices to show that system (\ref{general-bc}) admits a unique positive steady state.

Similar to the proof of Proposition \ref{prop-steadystates}, suppose that (\ref{general-bc}) admits two different positive steady states $(u,v)$ and $(u^*,v^*)$, w.l.o.g., $u>u^*$, $v<v^*$ and set $w= u- u^*>0$ and $z=v-v^*<0$. Then similar to the computations in deriving (\ref{important}), we have
\begin{equation}\label{important-general-bc-original}
\int_{\Omega}\left(b_1(x)w+c(x)z\right)w^2 dx\leq 0,\ \ \ \int_{\Omega}\left(b(x)w+c_2(x)z\right)z^2 dx\geq 0.
\end{equation}
This further implies that
\begin{equation}\label{important-general-bc}
\int_{\Omega}\left([\min_{\bar\Omega}b_1] w+[\max_{\bar\Omega}c]z\right)w^2 dx\leq 0,\ \ \ \int_{\Omega}\left([\max_{\bar\Omega}b]w+[\min_{\bar\Omega}c_2]z\right)z^2 dx\geq 0.
\end{equation}
According to (\ref{general-condition}), the above inequalities  (\ref{important-general-bc-original}) and (\ref{important-general-bc}), the arguments after (\ref{important}) can be applied, one sees that  $b(x), c(x), b_1(x), c_2(x)$ must be constants, $bc=b_1c_2$ and $bu^*_d = c_2v^*_D$. However, it is standard to check that $bc=b_1c_2$ and $bu^*_d = c_2v^*_D$ imply that both $(u^*_d, 0)$ and $(0,v^*_D)$ are neutrally stable. This is a contradiction.

(ii) Assume that $(u^*_d, 0)$ is locally unstable and $(0,v^*_D)$ is locally stable or neutrally stable, according to Theorem \ref{thm-monotone}  and the remark after it, it suffices to show that there is no positive steady state.

Suppose   that (\ref{general-bc}) has one positive steady state $(u,v)$. Then denote  $(u^*, v^*)= (0,v_D)$ and set $w= u- u^*= u>0$, $z=v-v^*<0$. Similar to the proof of Theorem \ref{thm-main} (ii), one can first derive that
$$
\int_{\Omega}\left(b_1(x)u+c(x)z\right)u^2 dx\leq 0,\ \ \ \int_{\Omega}\left(b(x)u+c_2(x)z\right)z^2 dx\geq 0,
$$
which gives that
$$
\int_{\Omega}\left([\min_{\bar\Omega}b_1] u+[\max_{\bar\Omega}c]z\right)u^2 dx\leq 0,\ \ \ \int_{\Omega}\left([\max_{\bar\Omega}b]u+[\min_{\bar\Omega}c_2]z\right)z^2 dx\geq 0.
$$
These inequalities, together with (\ref{general-condition}), yield that $b(x), c(x), b_1(x), c_2(x)$ must be constants, $bc=b_1c_2$ and $bu^*_d = c_2v^*_D$ by similar arguments after (\ref{important-2}). Therefore, again one can easily check that both $(u^*_d, 0)$ and $(0,v^*_D)$ are neutrally stable, which is a contradiction.

(iii) This case can be handled similarly as case (ii).

(iv) Assume that both $(u^*_d, 0)$ and $(0,v^*_D)$ are locally stable or neutrally stable. Then mainly following the proof of Proposition \ref{prop-localstability}, the result can be derived.
\end{proof}

\begin{proof}[Proof of Theorem \ref{thm-main-mix}]
(i) Assume that both $(\hat{u}_d, 0)$ and $(0,\hat{v}_D)$ are locally unstable. Same as the proof of Theorem \ref{thm-main-locationdependent} (i), it suffices to show that system (\ref{general-mix}) admits a unique positive steady state. Thus suppose that (\ref{general-bc}) admits two different positive steady states $(u,v)$ and $(\hat{u},\hat{v})$, w.l.o.g., $u>\hat{u}$, $v<\hat{v}$ and set $w= u- \hat{u}>0$ and $z=v-\hat{v}<0$.

Since local and nonlocal dispersals are mixed in this situation, we will include more computations in deriving the following inequalities:
\begin{equation}\label{important-3}
\int_{\Omega}\left(b_1(x)w+c(x)z\right)w^2 dx\leq 0,\ \ \ \int_{\Omega}\left(b(x)w+c_2(x)z\right)z^2 dx\geq 0.
\end{equation}
For this purpose, first, similar to (\ref{pf-nonpositive}), it is routine to check that
\begin{equation}\label{pf-3}
d\int_{\Omega}\left(-u \mathcal{K}[\hat{u}] + \hat{u} \mathcal{K}[u]  \right) \frac{w^2}{u \hat{u} } dx =  \int_{\Omega}(b_1(x)w+c(x)z)w^2 dx.
\end{equation}
Then due to \textbf{(C3)},    the left hand side of  (\ref{pf-3}) is calculated as follows
\begin{eqnarray*}
&& d\int_{\Omega}\left(-u \mathcal{K}[\hat{u}] + \hat{u} \mathcal{K}[u]  \right) \frac{w^2}{u \hat{u} } dx \\
&=& d\alpha \int_{\Omega}\int_{\Omega} k(x,y)\left[ \hat{u}(x)u(y)-u(x)\hat{u}(y) \right] \frac{(u(x)-\hat{u}(x))^2}{u(x) \hat{u}(x) } dy dx\\
&& + d(1-\alpha) \int_{\Omega} \left( -u \Delta \hat{u}+  \hat{u} \Delta u\right)\frac{(u(x)-\hat{u}(x))^2}{u(x) \hat{u}(x) } dx\\
&=&  {d\over 2}\int_{\Omega}\int_{\Omega} k(x,y)\left[ \hat{u}(x)u(y)-u(x)\hat{u}(y) \right]^2\left(  \frac{1}{u(x) u(y) } -\frac{1}{\hat{u}(x)\hat{u}(y) }  \right) dy dx\\
&& + d(1-\alpha) \int_{\Omega} | u(x) \nabla  \hat{u}(x) - \hat{u}(x) \nabla u(x)|^2\left(  \frac{1}{u^2(x) } -\frac{1}{\hat{u}^2(x) }  \right)dx\\
&\leq & 0.
\end{eqnarray*}
Thus due to (\ref{pf-3}), we have
$$
\int_{\Omega}\left(b_1(x)w+c(x)z\right)w^2 dx\leq 0,
$$
while the other inequality in (\ref{important-3}) can be handled similarly. The rest is similar to proof of Theorem \ref{thm-main-locationdependent} and hence it follows that both $(\hat{u}_d, 0)$ and $(0,\hat{v}_D)$ are neutrally stable, which is a contradiction.

The proofs of (ii), (iii) and (iv) are omitted since they are similar to that of Theorem \ref{thm-main-locationdependent} (ii), (iii) and (iv) respectively.
\end{proof}

\section*{Acknowledgement}
The authors would like to thank Prof. Yuan Lou for many inspiring discussions  and his constant encouragement and support, without whom this paper would not be possible. The second author also wishes to thank the staff at Department of Mathematics in the Ohio State University for their warm hospitality she received during her visit in the winter
of 2015, where part of this paper was written.

\appendix
\section{Single equations}

This section is about the existence and uniqueness of positive solutions to single equations. In fact, we consider a more general problem as follows:
\begin{equation}\label{single-general}
u_t(x,t) =\mathcal{L}[u] + f(x,u) \doteq d \int_{\Omega}k(x,y)u(y,t)dy +f(x,u),
\end{equation}
where $k(x,y)$ satisfies \textbf{(C2)} and $f(x,u)$ satisfies
\begin{itemize}
\item[\textbf{(f1)}] $f\in C(\bar\Omega\times \mathbb R^+, \mathbb R)$, $f$ is $C^1$ continuous  in $u$ and $f(x,0)=0$;
\item[\textbf{(f2)}] For $u>0$, $f(x,u)/u$ is  strictly decreasing in $u$;
\item[\textbf{(f3)}] There exists $C_1>0$ such that  $d \int_{\Omega}k(x,y)dy +f(x,C_1)/C_1\leq 0$ for all $x\in\Omega$.
\end{itemize}

To study the existence of positive steady state of (\ref{single-general}), it is natural to consider the local stability of the trivial solution $u\equiv 0$, which is determined by the signs of
$$
\lambda_0=\sup \left\{\textrm{Re}\, \lambda\, |\, \lambda\in \sigma(\mathcal{L}+f_u(x,0)   \right\}.
$$
Inspired by the work of Beresticki, Nirenberg and Varadham in \cite{BNV1994}, to study principal eigenvalue of nonlocal operators without assuming the symmetry of dispersal kernels, Coville \cite{Coville2010} was the first to   define
\begin{equation}\label{principaleigenvalue}
\lambda^* =\inf \left\{  \lambda\in \mathbb R \ \big |\ \exists \phi\in C(\bar\Omega), \phi>0 \textrm{ in } \bar\Omega \textrm{ such that }  \mathcal{L}[\phi] +f_u(x,0)\phi - \lambda\phi\leq 0 \right\}
\end{equation}
and it has been proved in \cite{CovilleLiWang} that $\lambda_0=\lambda^*.$

\begin{thm}\label{thm-single}
Under the assumptions \textbf{(C2)}, \textbf{(f1)}, \textbf{(f2)} and \textbf{(f3)}, problem (\ref{single-general}) admits a unique positive steady state in $C(\bar\Omega)$ if and only if $\lambda^*>0$. Moreover, the unique positive steady state, whenever it exists, is globally asymptotically stable, otherwise, $u\equiv 0$ is globally asymptotically stable.
\end{thm}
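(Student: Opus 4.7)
The plan is to combine monotone iteration via ordered sub/super solutions with the characterization $\lambda^* = \lambda_0$ and a monotonicity property of $\lambda^*$ driven by (f2). The first observation is that assumption (f3) says exactly that the constant $\bar u \equiv C_1$ is a supersolution of the stationary problem $\mathcal{L}[u] + f(x,u) = 0$, so the ``if'' half of the existence claim reduces to producing an arbitrarily small positive subsolution precisely when $\lambda^* > 0$.

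For this subsolution, the goal is a strictly positive $\phi \in C(\bar\Omega)$ and some $\eta > 0$ satisfying
$$
\mathcal{L}[\phi] + f_u(x,0)\phi \geq \eta \phi \quad \text{on } \bar\Omega.
$$
With such $\phi$ in hand, a uniform Taylor expansion $f(x, \delta \phi) = \delta f_u(x,0)\phi + o(\delta)$ makes $\underline u = \delta \phi$ a subsolution for all small enough $\delta$, and shrinking $\delta$ further ensures $\underline u \leq \bar u$. The existence of such a pair $(\phi, \eta)$ is the dual content of the principal eigenvalue definition (\ref{principaleigenvalue}): since $\lambda^* > 0$ equals the spectral bound $\lambda_0$, there is an approximate eigenfunction realizing any level strictly below $\lambda^*$. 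In the nonsymmetric regime, where an honest principal eigenfunction may not exist, I would perturb $\mathcal{L}$ by a small compact correction so that a genuine eigenpair appears, take that eigenfunction as $\phi$, and pass to the limit while keeping $\eta = \lambda^*/2$. Once the ordered pair $\underline u \leq \bar u$ is secured, the comparison principle for the nonlocal parabolic flow (a straightforward consequence of (C2)) yields monotone time orbits from $\underline u$ and $\bar u$ converging in $C(\bar\Omega)$ to positive stationary solutions $U_- \leq U_+$.

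Uniqueness and the ``only if'' direction both lean on (f2). For uniqueness, suppose $u_1, u_2$ are positive steady states and set $t^* = \sup\{t > 0 : t u_1 \leq u_2 \text{ on } \bar\Omega\}$; at a contact point $x_0$ where $t^* u_1(x_0) = u_2(x_0)$, the integral inequality $\mathcal{L}[u_2 - t^* u_1](x_0) \geq 0$ combined with the equations forces $t^* f(x_0, u_1(x_0)) \geq f(x_0, u_2(x_0))$, and strict monotonicity of $f(x,u)/u$ in $u$ is incompatible with $t^* \neq 1$; swapping the roles of $u_1, u_2$ gives $u_1 \equiv u_2$. For nonexistence when $\lambda^* \leq 0$, any positive steady state $u$ would, via $\mathcal{L}[u] + (f(x,u)/u) u = 0$, serve as a positive test function in the characterization (\ref{principaleigenvalue}) applied to the shifted operator $\mathcal{L} + f(x,u)/u$, forcing the generalized eigenvalue of that operator to be $\leq 0$. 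Strict monotonicity of $\lambda^*$ in the zeroth-order coefficient, combined with $f(x,u)/u < f_u(x,0)$ from (f2), would then give $\lambda^* > 0$, a contradiction.

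Global asymptotic stability follows by a sandwich argument. For any positive continuous initial datum $u_0$, choose $\delta$ so small that $\delta \phi \leq u_0$ and $M$ so large that $u_0 \leq M$ and $M$ is a supersolution; the solutions issuing from $\delta \phi$ and from $M$ are respectively nondecreasing and nonincreasing in $t$, and by the uniqueness proved above they converge to the same limit --- the unique positive steady state when $\lambda^* > 0$, or $0$ when $\lambda^* \leq 0$. The comparison principle squeezes the solution with initial datum $u_0$ between these two orbits, yielding the same limit. The main technical obstacle throughout is the construction of the subsolution in the absence of a genuine principal eigenfunction, which is the one place the proof must depart from the usual Laplacian template.
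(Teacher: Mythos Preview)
Your proposal follows essentially the same route as the paper: constant supersolution from \textbf{(f3)}, a positive subsolution built from a perturbed principal eigenpair (the paper perturbs the zeroth-order coefficient $f_u(\cdot,0)$ rather than $\mathcal{L}$, but the idea is identical), monotone iteration, the sliding argument $t^*=\sup\{t:tu_1\le u_2\}$ for uniqueness, and a sandwich for global convergence. One logical slip to tighten in your ``only if'' step: using $u$ as a test function gives only $\lambda^*\bigl(\mathcal{L}+f(\cdot,u)/u\bigr)\le 0$, and strict monotonicity then yields $\lambda^*>\text{(something }\le 0)$, which does \emph{not} force $\lambda^*>0$; the repair is to observe that $u$ is a genuine positive eigenfunction with eigenvalue $0$, so in fact $\lambda^*\bigl(\mathcal{L}+f(\cdot,u)/u\bigr)=0$, or equivalently to run the touching argument between $u$ and a near-supersolution $\phi$ directly as the paper does.
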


Thanks to Theorem \ref{thm-single}, the following result follows immediately.
\begin{cor}\label{Cor-single}
Assume that \textbf{(C1)}, \textbf{(C2)} hold.
\begin{itemize}
\item[(i)] (\ref{singled}) admits a unique positive steady state, denoted by $u_d$, in $C(\bar\Omega)$ if and only if
    $$
\lambda_d=\inf \left\{  \lambda\in \mathbb R \ \big |\ \exists \phi\in C(\bar\Omega), \phi>0 \textrm{ in } \bar\Omega \textrm{ such that }  d \mathcal{K}[\phi] + m \phi -\lambda\phi\leq 0 \right\}>0.
$$
\item[(ii)] (\ref{singleD}) admits a unique positive steady state,  denoted by $v_D$, in $C(\bar\Omega)$ if and only if
    $$
\lambda_D=\inf \left\{  \lambda\in \mathbb R \ \big |\ \exists \phi\in C(\bar\Omega), \phi>0 \textrm{ in } \bar\Omega \textrm{ such that }  D \mathcal{P}[\phi] + M \phi -\lambda\phi\leq 0 \right\}>0.
$$
\end{itemize}
\end{cor}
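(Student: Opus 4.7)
The plan is to derive Corollary \ref{Cor-single} directly from Theorem \ref{thm-single} by rewriting each single-species steady-state equation in the form considered there. The only nontrivial step is absorbing the subtraction term appearing in $\mathcal{K}$ (respectively $\mathcal{P}$) into the reaction nonlinearity, so that what is left of the linear part matches the pure integral operator $\mathcal{L}[u]=d\int_\Omega k(x,y)u(y)\,dy$ used in Theorem \ref{thm-single}. Once that identification is made, both existence/uniqueness and the form of the generalized principal eigenvalue follow immediately.

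For part (i), I would set
\[
f(x,u)\ =\ u\bigl(m(x)-d\,a_d(x)-u\bigr),
\]
so that the steady-state equation $d\mathcal{K}[u]+u(m(x)-u)=0$ from \eqref{singled} becomes exactly
\[
d\int_\Omega k(x,y)u(y)\,dy\ +\ f(x,u)\ =\ 0.
\]
I would then verify the three standing hypotheses on $f$: continuity and $C^1$-dependence in $u$ with $f(x,0)=0$ hold by \textbf{(C1)} and \textbf{(C2)} (note $a_d\in C(\bar\Omega)$ since $k\in C(\mathbb R^n\times\mathbb R^n)$), which gives \textbf{(f1)}; the quotient $f(x,u)/u=m(x)-d\,a_d(x)-u$ is strictly decreasing in $u$, giving \textbf{(f2)}; and for \textbf{(f3)} it suffices to choose
\[
C_1\ >\ d\sup_{x\in\bar\Omega}\Bigl(\int_\Omega k(x,y)\,dy-a_d(x)\Bigr)\ +\ \sup_{\bar\Omega}m,
\]
which is finite since $k$ is continuous and $\Omega$ is bounded.

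The last point is to match the two definitions of the principal eigenvalue. With $f_u(x,0)=m(x)-d\,a_d(x)$, the differential inequality in \eqref{principaleigenvalue} reads
\[
d\int_\Omega k(x,y)\phi(y)\,dy+\bigl(m(x)-d\,a_d(x)\bigr)\phi(x)-\lambda\phi(x)\ \leq\ 0,
\]
which, by the definitions \eqref{K-kernel} of $\mathcal{K}$ for either type \textbf{(N)} or \textbf{(D)}, is literally $d\mathcal{K}[\phi]+m\phi-\lambda\phi\le 0$. Hence the quantity $\lambda^\ast$ of \eqref{principaleigenvalue} for this $f$ coincides exactly with $\lambda_d$ from Corollary \ref{Cor-single}(i). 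Theorem \ref{thm-single} then yields a unique positive $u_d\in C(\bar\Omega)$ iff $\lambda_d>0$, and gives the stated global asymptotic behaviour.

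For part (ii), I would repeat the same reduction with $f(x,v)=v(M(x)-D\,a_D(x)-v)$, $p$ in place of $k$, and $\mathcal{P}$ in place of $\mathcal{K}$; the verifications of \textbf{(f1)}--\textbf{(f3)} and the matching of $\lambda^\ast$ with $\lambda_D$ are identical. The only subtle point in the whole argument is the algebraic identification of the two forms of the eigenvalue, but because the subtraction term in $\mathcal{K}$ is a pure multiplication operator it cancels cleanly on both sides of the defining inequality, so there is no genuine obstacle; the corollary is essentially a change of notation applied to Theorem \ref{thm-single}.
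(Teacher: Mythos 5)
Your proposal is correct and follows exactly the route the paper intends: the paper derives Corollary \ref{Cor-single} by stating that it ``follows immediately'' from Theorem \ref{thm-single}, which tacitly relies on the same reduction you make explicit, namely absorbing the multiplication term $-d\,a_d(x)u$ (resp.\ $-D\,a_D(x)v$) into the nonlinearity so that $f(x,u)=u\bigl(m(x)-d\,a_d(x)-u\bigr)$ satisfies \textbf{(f1)}--\textbf{(f3)} and the quantity $\lambda^\ast$ of (\ref{principaleigenvalue}) coincides with $\lambda_d$ (resp.\ $\lambda_D$). Your write-up merely fills in the routine verifications the paper leaves implicit, and all of them are carried out correctly.
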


\begin{proof}[Proof of Theorem \ref{thm-single}]
First, assume that (\ref{single-general}) admits a positive steady state in $C(\bar\Omega)$, denoted by $\theta$, and $\lambda^*\leq 0$. Since $\theta(x)$ satisfies
$$
\mathcal{L}[\theta] +f(x,\theta) =0
$$
and $\theta>0$ in $\bar\Omega$, by assumptions \textbf{(f1)} and \textbf{(f2)}, it is easy to check that there exists $\epsilon>0$ such that
$$
\mathcal{L}[\theta] +f_u(x,0)\theta -\epsilon \theta = \theta \left( - {f(x,\theta)\over \theta} +f_u(x,0) -\epsilon\right)>0\ \textrm{ in } \bar\Omega.
$$
Moreover, according to the definition of $\lambda^*$ in (\ref{principaleigenvalue}), there exists $\phi\in C(\bar\Omega), \phi>0 \textrm{ in } \bar\Omega$ and $  \lambda^*<\lambda <\epsilon$ such that
$
\mathcal{L}[\phi] +f_u(x,0)\phi -\lambda\phi\leq 0,
$
which immediately implies that
$$
\mathcal{L}[\phi] +f_u(x,0)\phi - \epsilon \phi < 0.
$$
Then consider $v_{\ell} =\phi -\ell\theta$ and one sees that for any $\ell>0$,
\begin{equation}\label{appendix-v}
\mathcal{L}[v_{\ell}] +f_u(x,0)v_{\ell} - \epsilon v_{\ell} < 0.
\end{equation}
When $\ell$ is sufficiently small, $v_{\ell} =\phi -\ell\theta >0$ in $\bar\Omega$. Hence increasing $\ell$, there exists $\ell_0$ such that $v_{\ell_0} \geq 0$ in $\bar\Omega$ and $v_{\ell_0}$ touches zero at some $x=x_0\in \bar\Omega$. Then clearly
$$
\mathcal{L}[v_{\ell_0}](x_0) +f_u(x_0,0)v_{\ell_0}(x_0) - \epsilon v_{\ell_0}(x_0) \geq 0,
$$
which contradicts (\ref{appendix-v}). Therefore $\lambda^*>0$.

Now assume that $\lambda^*>0$. First, note that assumptions \textbf{(f2)} and \textbf{(f3)} indicate that for all $c\geq C_1$, $u = c$ is an upper solution of (\ref{single-general}) and thus $u(x,t; c)$ is decreasing in $t$, where $u(x,t; u_0)$ denotes  the solution of (\ref{single-general}) with initial data $u_0$.

Next, let us construct suitable lower solutions. Note that there exists $h(x)\in C(\bar\Omega)$ with $\|h- f_u(\cdot,0)\|_{L^{\infty}} < \lambda^*/3$ such that the principal eigenvalue of
$$
\mathcal{L}[\phi] + h(x)\phi -\lambda \phi =0
$$
exists, denoted by $\tilde{\lambda}$, and the corresponding eigenfunction is denoted by $\tilde{\phi}$, where w.l.o.g., $\|\tilde{\phi}\| _{L^{\infty}}=1$. This is a standard result for linear operator with nonlocal dispersals. For nonsymmetric case, see \cite{Coville2010, CovilleLiWang} for example. Also
$$
\tilde{\lambda} =\inf \left\{  \lambda\in \mathbb R \ \big |\ \exists \phi\in C(\bar\Omega), \phi>0 \textrm{ in } \bar\Omega \textrm{ such that }  \mathcal{L}[\phi] + h(x)\phi -\lambda\phi\leq 0 \right\}.
$$
This implies that
$$
| \lambda^* - \tilde{\lambda} | \leq \|h- f_u(\cdot,0)\|_{L^{\infty}} < \lambda^*/3.
$$
Then it is straightforward to verify that due to assumption \textbf{(f1)}
\begin{eqnarray*}
&& \mathcal{L}[\delta \tilde{\phi}] +f(x, \delta\tilde{\phi})\\
&=& -  h(x)\delta\tilde{\phi} - \tilde{\lambda}\delta\tilde{\phi} + {f(x, \delta\tilde{\phi}) \over \delta\tilde{\phi}} \delta\tilde{\phi}\\
&\geq & \delta\tilde{\phi} \left( \lambda^* -\left\|h- f_u(\cdot,0)\right\|_{L^{\infty}}-| \lambda^* - \tilde{\lambda} | -  \left\| {f(\cdot, \delta\tilde{\phi}) \over \delta\tilde{\phi}}  - f_u(\cdot,0) \right\|_{L^{\infty}}  \right)\\
&> & \delta\tilde{\phi} \left( {\lambda^* \over 3} -  \left\| {f(\cdot, \delta\tilde{\phi}) \over \delta\tilde{\phi}}  - f_u(\cdot,0) \right\|_{L^{\infty}}  \right)>0
\end{eqnarray*}
for $\delta>0$ small enough. We conclude that $u = \delta \tilde{\phi}$ is a lower solution of (\ref{single-general})
for any $\delta>0$ sufficiently  small and thus $u(x,t; \delta \tilde{\phi})$ is increasing in $t$.

Based on the upper and lower solutions constructed above, one sees that there exist $\hat{u}\geq \underline{u} >0$, with
$\hat{u},\ \underline{u}\in L^{\infty}$, such that
$$
\lim_{t\rightarrow +\infty} u(x,t; c) = \hat{u}(x)\ \textrm{and} \
\lim_{t\rightarrow +\infty} u(x,t; \delta \tilde{\phi}) = \underline{u}(x)\ \ \textrm{pointwisely}.
$$
Moreover, both $\hat{u}$ and $\underline{u}$ are positive steady states of (\ref{single-general}) in $L^{\infty}$. Then applying the same arguments in \cite[Page 434]{BZh}, one sees that $\hat{u},\ \underline{u}\in C(\bar\Omega)$. Thanks to \cite[Theorem 7.13]{Rudin}, we have
\begin{equation}\label{appendix-converge}
\lim_{t\rightarrow +\infty} u(x,t; c) = \hat{u}(x)\ \textrm{and} \
\lim_{t\rightarrow +\infty} u(x,t; \delta \tilde{\phi}) = \underline{u}(x) \ \textrm{in}\ L^{\infty}(\Omega).
\end{equation}

Due to (\ref{appendix-converge}) and the fact that upper solutions can be arbitrarily large and lower solutions can be arbitrarily small, to show the uniqueness and global convergence of the positive steady state of (\ref{single-general}), it suffices to demonstrate $\hat{u} = \underline{u}$. Since $\hat{u}\geq \underline{u}$, we only need show $\hat{u}\leq \underline{u}$.

Let $\ell_1 = \inf \{ \ell\ | \ \hat{u}\leq \ell \underline{u} \}$.  If $\ell_1>1$, using \textbf{(f2)}, it is easy to check that
$$
\mathcal{L}[\ell_1 \underline{u}] +f(x, \ell_1 \underline{u})= \mathcal{L}[\ell_1 \underline{u}] + {f(x, \ell_1 \underline{u})\over \ell_1 \underline{u}}\ell_1 \underline{u}
< \mathcal{L}[\ell_1 \underline{u}] + {f(x,   \underline{u})\over  \underline{u}}\ell_1 \underline{u} =0,
$$
i.e., $\ell_1 \underline{u}$ is an upper solution.
Note that $\hat{u}\leq \ell_1  \underline{u}$. If $\ell_1  \underline{u}- \hat{u}$ touches zero at some $x=x_0$ in $\bar\Omega$, then
\begin{eqnarray*}
\mathcal{L}[\ell_1 \underline{u}](x_0)+f(x_0, \ell_1 \underline{u}(x_0))= \mathcal{L}[\ell_1 \underline{u}-\hat{u}](x_0)+f(x_0, \ell_1 \underline{u}(x_0))-f(x_0,  \hat{u}(x_0))\geq 0.
\end{eqnarray*}
This is a contradiction. If $\ell_1  \underline{u}- \hat{u}$ never touches zero   in $\bar\Omega$, it contradicts the definition of $\ell_1$. Therefore, $\ell_1\leq 1$ and thus $\hat{u}\leq \underline{u}$.

At the end, when there is no positive steady state, due to \cite[Theorem 7.13]{Rudin},
one sees that for any $c\geq C_1$,
$
\lim_{t\rightarrow +\infty} u(x,t; c) = 0 \ \textrm{in}\ L^{\infty}(\Omega).
$
The proof is complete.
\end{proof}

\section{At most one semi-trivial steady state of (\ref{original})}
\begin{thm}\label{thm-nonsymmetric}
Assume that \textbf{(C1)}, \textbf{(C2)} hold, $b,c>0$. For the global dynamics of the system (\ref{original}) with nonlocal operators $\mathcal{K}$ and $\mathcal{P}$ defined as either type \textbf{(N)} or \textbf{(D)}, the following statements hold:
\begin{itemize}
\item[(i)] $(0,0)$ is locally stable or neutral stable, then $(0,0)$ is globally asymptotically stable;
\item[(ii)] System (\ref{original}) admits only one semi-trivial steady state  $(u_d, 0)$, then  it is globally asymptotically stable;
\item[(iii)] System (\ref{original}) admits only one semi-trivial steady state $(0,v_D)$, , then  it is globally asymptotically stable.
\end{itemize}
\end{thm}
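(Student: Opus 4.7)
The plan is to reduce each case to the single-species dynamics of Theorem \ref{thm-single} and Corollary \ref{Cor-single} by exploiting the comparison structure of (\ref{original}). Since $u,v\geq 0$ and $b,c>0$, the competition terms are nonpositive, so any solution obeys
$$
u_t\leq d\mathcal{K}[u]+u(m(x)-u),\qquad v_t\leq D\mathcal{P}[v]+v(M(x)-v),
$$
hence $u\leq\tilde u$ and $v\leq\tilde v$ pointwise, where $\tilde u,\tilde v$ solve (\ref{singled}) and (\ref{singleD}) with the same initial data. I will also use that the linearization of (\ref{original}) at $(0,0)$ is a decoupled operator with spectral bound $\max(\lambda_d,\lambda_D)$ (in the notation of Corollary \ref{Cor-single}), so that by that corollary, existence of $(u_d,0)$ (resp.\ $(0,v_D)$) is equivalent to $\lambda_d>0$ (resp.\ $\lambda_D>0$).

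For (i), local or neutral stability of $(0,0)$ forces $\lambda_d\leq 0$ and $\lambda_D\leq 0$, so Theorem \ref{thm-single} gives $\tilde u(\cdot,t),\tilde v(\cdot,t)\to 0$ in $L^\infty$, and the comparison sandwich yields $u,v\to 0$. Cases (ii) and (iii) are symmetric under the swap $(u,d,\mathcal{K},m,c)\leftrightarrow(v,D,\mathcal{P},M,b)$, so I only describe (ii): here $\lambda_d>0$ and $\lambda_D\leq 0$, and the same comparison argument yields $v(\cdot,t)\to 0$ uniformly. To prove $u(\cdot,t)\to u_d$, I will fix a small $\epsilon>0$, choose $T=T(\epsilon)$ with $v(x,t)<\epsilon$ for $t\geq T$, and on $[T,\infty)$ sandwich $u$ between the solutions of the single-species problems with growth rates $m-c\epsilon$ and $m$, both started from $u(\cdot,T)$. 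By Theorem \ref{thm-single} the upper orbit converges uniformly to $u_d$, and for small enough $\epsilon$ the perturbed problem still has a unique positive steady state $u_d^\epsilon$ to which the lower orbit converges. Letting $\epsilon\downarrow 0$ and using that $u_d^\epsilon\to u_d$ uniformly closes the sandwich.

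The main obstacle is this perturbation step in (ii)--(iii): one must verify that the perturbed quantity $\lambda_d^\epsilon$ associated with the growth rate $m-c\epsilon$ remains positive for small $\epsilon$, so that Theorem \ref{thm-single} actually supplies $u_d^\epsilon$, and that $u_d^\epsilon\to u_d$ in $L^\infty$ as $\epsilon\downarrow 0$. The former follows from continuity of the characterization (\ref{principaleigenvalue}) of $\lambda^*$ in the zero-order coefficient; the latter follows from the monotonicity $u_d^\epsilon\leq u_d$, extraction of a pointwise limit of the monotone family $\{u_d^\epsilon\}$, identification of the limit with $u_d$ via the uniqueness clause of Theorem \ref{thm-single}, and Dini's theorem to upgrade to uniform convergence. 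These soft-analysis arguments replace the standard elliptic regularity estimates that are unavailable for purely nonlocal operators; once they are in place, everything else is routine comparison.
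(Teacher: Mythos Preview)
Your proposal is correct and, for parts (ii)--(iii), follows essentially the same sandwich/perturbation route as the paper: once $v\to 0$ uniformly, trap $u$ between the single-species flows with growth rates $m$ and $m-c\epsilon$, and let $\epsilon\downarrow 0$. Two small remarks. First, the positivity of $\lambda_d^\epsilon$ for small $\epsilon$ needs no continuity argument: replacing $m$ by $m-c\epsilon$ shifts the whole spectrum by $-c\epsilon$, so $\lambda_d^\epsilon=\lambda_d-c\epsilon>0$ once $\epsilon<\lambda_d/c$. Second, for the convergence $u_d^\epsilon\to u_d$ the paper instead reads off $u_d^\epsilon$ from the algebraic/quadratic relation obtained by solving the steady-state equation for $u_d^\epsilon(x)$ in terms of the (bounded) integral term, and passes to the limit there; your monotone-family/Dini route is an equally valid alternative, though you should note that the pointwise limit $\bar u$ is a priori only in $L^\infty$ and one appeals to the continuity step in the proof of Theorem~\ref{thm-single} (the argument borrowed from \cite{BZh}) before invoking uniqueness and Dini.

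For part (i) your argument is genuinely more direct than the paper's. The paper first shows that $\lambda_d,\lambda_D\le 0$ forces the absence of \emph{any} nontrivial nonnegative steady state (in particular, proving along the way that a positive steady state of (\ref{original}) would yield positive steady states of both (\ref{singled}) and (\ref{singleD})), and then invokes the monotone-system structure to conclude global convergence to $(0,0)$. You bypass this by the one-line comparison $0\le u\le\tilde u\to 0$, $0\le v\le\tilde v\to 0$. Your route is shorter; the paper's route has the side benefit of recording the independent fact that no coexistence state can exist in this regime.
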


\begin{proof}
Suppose that  $(0,0)$ is locally stable or neutral stable, i.e.,
$$
\sup \left\{\textrm{Re } \lambda \ | \ \lambda\in\sigma(\mathcal{L}_{(0,0)}) \right\} \leq 0,
$$
where
\begin{equation}\label{lin-0}
\mathcal{L}_{(0,0)} {\phi\choose\psi}={d\mathcal{K}[\phi]+ m(x) \phi \choose D\mathcal{P}[\psi]+ M(x) \psi}.
\end{equation}
Note that it is proved in \cite{CovilleLiWang} that
$$
\lambda_d = \sup \left\{\textrm{Re } \lambda \ | \ \lambda\in\sigma(d\mathcal{K} + m(x) ) \right\}
$$
and
$$
\lambda_D = \sup \left\{\textrm{Re } \lambda \ | \ \lambda\in\sigma(D\mathcal{P} + M(x) ) \right\}
$$
where $\lambda_d$ and $\lambda_D$ are defined in Corollary \ref{Cor-single}. Moreover, it is routine to show that
$$
\sup \left\{\textrm{Re } \lambda \ | \ \lambda\in\sigma(\mathcal{L}_{(0,0)}) \right\} = \max \{\lambda_d,  \lambda_D  \}.
$$
Hence $\lambda_d, \lambda_D \leq 0$. This, by Corollary \ref{Cor-single}, implies that neither (\ref{singled}) nor (\ref{singleD}) admits  positive steady states and thus (\ref{original}) has no semi-trivial steady states.

Furthermore, we claim that {\it if (\ref{original}) has a positive steady state, denoted by $(u,v)$, then both (\ref{singled}) and (\ref{singleD}) will admit positive steady states in $C(\bar\Omega)$.} Notice that
$$
d \mathcal{K}[u]  +u(m(x)-u)=  cuv> 0
$$
and
$$
 D \mathcal{P}[v]  +v(M(x)- v) =b uv >0.
$$
Then the claim follows from the arguments employed in the proof of Theorem  \ref{thm-single}.

Therefore, when $(0,0)$ is locally stable or neutral stable, except for $(0,0)$, (\ref{original}) has no other nonnegative steady state. Then based on the fact that (\ref{original}) is a monotone system under the competitive order, it is easy to show that $(0,0)$ is globally asymptotically stable.

Next, suppose that  (\ref{original}) admits only one semi-trivial steady state, w.l.o.g., say $(u_d, 0)$, i.e., (\ref{singled}) has a positive steady state $u_d$, while (\ref{singleD}) admits no positive steady states.
Based on previous arguments, one sees that $(0,0)$ is locally unstable and  (\ref{original}) has no positive steady states. Let $(u(x,t), v(x,t))$ denote the solution of (\ref{original}) with positive initial value $(u_0,v_0)$. Then by the assumption that (\ref{singleD}) admits no positive steady states, it is easy to see that
\begin{equation}\label{pf-v-0}
\lim_{t\rightarrow +\infty} v(x,t) = 0  \ \textrm{in}\ L^{\infty}(\Omega).
\end{equation}
To verify the convergence of $u(x,t)$, first by Corollary \ref{Cor-single}, $u_d$ exists is equivalent to $\lambda_d< 0$. Thus for $\epsilon>0$ small,
$$
w_t = d \mathcal{K}[w]  +w(m(x)-\epsilon -w)
$$
admits a unique positive steady state $w_{\epsilon}$, which is globally asymptotically stable. According to (\ref{pf-v-0}), there exists $T_{\epsilon}$, such that $0<v(x,t)<\epsilon$ for $t>T_{\epsilon}$. Hence one sees that
$$
w(x,t; u(x,T_{\epsilon})) < u(x, t+T_{\epsilon}) < \tilde{u}(x,t; u(x,T_{\epsilon})),
$$
where $\tilde{u}(x,t; u(x,T_{\epsilon}))$ is the solution to the problem
\begin{equation*}
\begin{cases}
\tilde{u}_t= d \mathcal{K}[\tilde{u}]  +\tilde{u} (m(x)-\tilde{u} ) &\textrm{in } \Omega\times[0,\infty),\\
\tilde{u}(x,0)=u(x,T_{\epsilon}),  &\textrm{in } \Omega.
\end{cases}
\end{equation*}
This yields that
$$
w_{\epsilon}\leq \liminf_{t\rightarrow +\infty} u(x,t)\leq \limsup_{t\rightarrow +\infty} u(x,t)\leq u_d.
$$
Due to the expression
$$
w_{\epsilon}(x) ={1\over 2} \left( m(x)-\epsilon + \sqrt{(m(x)-\epsilon)^2-4 d \mathcal{K}[\tilde{w_{\epsilon}}]}\right),
$$
it is standard to show that $w_{\epsilon}\rightarrow u_d$ in $L^{\infty}(\Omega)$ as $\epsilon \rightarrow  0$. Hence
$$
\lim_{t\rightarrow +\infty} u(x,t) = u_d \ \textrm{in}\ L^{\infty}(\Omega).
$$
Therefore, $(u_d, 0)$ is globally asymptotically stable.
\end{proof}

\end{document}